\DeclareMathAlphabet{\mathzc}{OT1}{pzc}{m}{it}
\newcommand{\bburl}[1]{\textcolor{blue}{\url{#1}}}
\def\vecsign#1{\rule[1.388\LMex]{\dimexpr#1-2.5pt}{.36\LMpt}%
  \kern-6.0\LMpt\mathchar"017E}
\newcommand{\burl}[1]{\textcolor{blue}{\url{#1}}}
\newcommand{\mattwo}[4]
{\left(\begin{array}{cc}
                        #1  & #2   \\
                        #3 &  #4
 \end{array}\right) }
\numberwithin{equation}{section}
\newtheorem{thm}{Theorem}[section]
\theoremstyle{plain}
\newtheorem{definition}[thm]{Definition}
\newtheorem{lemma}[thm]{Lemma}
\newtheorem{theorem}[thm]{Theorem}
\theoremstyle{definition}
\newcommand\be{\begin{equation}}
\newcommand\ee{\end{equation}}
\newcommand\bee{\begin{equation*}}
\newcommand\eee{\end{equation*}}
\newcommand\bea{\begin{eqnarray}}
\newcommand\eea{\end{eqnarray}}
\newcommand\beae{\begin{eqnarray*}}
\newcommand\eeae{\end{eqnarray*}}
\newcommand\bi{\begin{itemize}}
\newcommand\ei{\end{itemize}}
\newcommand\ben{\begin{enumerate}}
\newcommand\een{\end{enumerate}}
\newcommand\bc{\begin{center}}
\newcommand\ec{\end{center}}
\newcommand\ba{\begin{array}}
\newcommand\ea{\end{array}}
\newcommand{\Z}{\ensuremath{\mathbb{Z}}}
\newcommand{\Q}{\mathbb{Q}}
\newcommand{\N}{\mathbb{N}}
\newcommand\frakfamily{\usefont{U}{yfrak}{m}{n}}
\DeclareTextFontCommand{\textfrak}{\frakfamily}
\newcommand\leg[2]{{#1\overwithdelims () #2}}
\newtheorem{rek}[thm]{Remark}
\newcommand{\hr}[1]{\href{#1}{\url{#1}}}
\newsavebox\EllipticCurve
\title{Connections of Class Numbers to the Group Structure of Generalized Pythagorean Triples}
\author{Thomas Jaklitsch}
\email{\textcolor{blue}{\href{mailto:thomasjaklitsch@college.harvard.edu}{thomasjaklitsch@college.harvard.edu}}}
\address{Department of Mathematics, Harvard University, Cambridge, MA 02138}
\author{Thomas C. Martinez}
\email{\textcolor{blue}{\href{mailto:tmartinez@hmc.edu}{tmartinez@hmc.edu}}}
\address{Department of Mathematics, Harvey Mudd College, Claremont, CA 91711}
\author{Steven J. Miller}
\email{\textcolor{blue}{\href{mailto:sjm1@williams.edu}{sjm1@williams.edu}},  \textcolor{blue}{\href{Steven.Miller.MC.96@aya.yale.edu}{Steven.Miller.MC.96@aya.yale.edu}}}
\address{Department of Mathematics and Statistics, Williams College, Williamstown, MA 01267}
\author{Sagnik Mukherjee}
\email{\textcolor{blue}{\href{mailto:smukherjee@cmi.ac.in}{smukherjee@cmi.ac.in}}}
\address{Department of Mathematics, Chennai Mathematical Institute, Siruseri, Chennai, Tamilnadu-603103}
\thanks{We thank Amnon Yekutieli for introducing us to the problem and sharing his work on the subject, and our colleagues from the 2021 Polymath REU, Leart Ajvazaj, Manyi Guo, Dylan Jamner, Yuan Lu, Jonathan Marvel-Zuccola, Sydney Morgan, and Bangqi (Blair) Yuan, for numerous helpful conversations and comments, especially Sydney and Blair, who worked with the third named author on a preliminary research project which was the springboard for this work. Those lectures, notes and questions were a valuable starting point for the present paper.}
\subjclass[2020]{11D09 (primary), 11E41  (secondary)}
\keywords{Class numbers, Pythagorean Triples, Pell Equation, Diophantine Equations, Group Structure}
\date{\today}
\begin{document}

\maketitle

\begin{abstract} Two well-studied Diophantine equations are those of Pythagorean triples and elliptic curves; for the first we have a
parametrization through rational points on the unit circle, and for the second we have a structure theorem for the group of rational solutions.
Recently Yekutieli discussed a connection between these two problems, and described the group structure of Pythagorean triples and the
number of triples for a given hypotenuse. We generalize these methods and results to Pell's equation. We find a similar group structure and count
on the number of solutions for a given $z$ to $x^2 + Dy^2 = z^2$ when $D$ is 1 or 2 modulo 4 and the class group of $\mathbb{Q}[\sqrt{-D}]$ is a free
$\Z_2$ module, which always happens if the class number is at most 2. We give examples of when the results hold for a
class number greater than 2, as well as an example with different behavior when the class group does not have this structure.
\end{abstract}

%%%overleaf: https://www.overleaf.com/project/617442e6a32b805c25449a2b

\tableofcontents

%%%%%%%%%%%%%%%%%%%%%%%%%%%%%%%%%%%%%%%%%%%%%%%%%%%%%%%%%%%%%%%%%%%%%%%%%%%%%%%%%%%%%%%%%%%%%%%%%%%%%%%%%%%%%%%%%%%%%%%%%%%%%%%%%%%%
%%%%%%%%%%%%%%%%%%%%%%%%%%%%%%%%%%%%%%%%%%%%%%%%%%%%%%%%%%%%%%%%%%%%%%%%%%%%%%%%%%%%%%%%%%%%%%%%%%%%%%%%%%%%%%%%%%%%%%%%%%%%%%%%%%%%
%%%%%%%%%%%%%%%%%%%%%%%%%%%%%%%%%%%%%%%%%%%%%%%%%%%%%%%%%%%%%%%%%%%%%%%%%%%%%%%%%%%%%%%%%%%%%%%%%%%%%%%%%%%%%%%%%%%%%%%%%%%%%%%%%%%%
%%%%%%%%%%%%%%%%%%%%%%%%%%%%%%%%%%%%%%%%%%%%%%%%%%%%%%%%%%%%%%%%%%%%%%%%%%%%%%%%%%%%%%%%%%%%%%%%%%%%%%%%%%%%%%%%%%%%%%%%%%%%%%%%%%%%%%%%%%%%%%%%
%%%%%%%%%%%%%%%%%%%%%%%%%%%%%%%%%%%%%%%%%%%%%%%%%%%%%%%%%%%%%%%%%%%%%%%%%%%%%%%%%%%%%%%%%%%%%%%%%%%%%%%%%%%%%%%%%%%%%%%%%%%%%%%%%%%%%%%%%%%%%%%%
%%%%%%%%%%%%%%%%%%%%%%%%%%%%%%%%%%%%%%%%%%%%%%%%%%%%%%%%%%%%%%%%%%%%%%%%%%%%%%%%%%%%%%%%%%%%%%%%%%%%%%%%%%%%%%%%%%%%%%%%%%%%%%%%%%%%%%%%%%%%%%%%
\section{Introduction}

The study of the number and structure of rational solutions to Diophantine equations (polynomials of finite degree with integer coefficients) is related to numerous important problems in mathematics, from Pythagorean triples to elliptic curves. Much is known for these two problems, where we can parametrize the solutions, which form commutative groups; see for example \cite{Kn, Maz1, Maz2, MT-B, ST}. We generalize these results to Pell's equation $x^2 + Dy^2 = z^2$, and show that for certain $D$, leading to class groups where every element has order at most 2, we have similar group structures

Our motivation is a recent paper by Yekutieli \cite{Ye}. The Pythagorean triples are integer solutions of the equation $x^2 + y^2 = z^2$, and correspond to rational points on the unit circle; thus to a triple $(a,b,c)$ we associate the complex number \be \zeta_{a,b,c} \ =\ x + iy\ =\ \frac{a}{c} + \frac{b}{c}i.\ee These can be parametrized by looking at lines with rational slope emanating from a fixed rational point, often taken to be $(-1, 0)$. There are four solutions where either $a$ or $b$ is zero: $1, i, -1, -i$. These are the units of $\Z[i] = \{a + ib: a, b \in \Z\}$, and correspond to trivial Pythagorean triples. We now consider $\zeta$ where both $a$ and $b$ are non-zero. We cannot have $a=b$, as that would lead to $\sqrt{2}$ being rational. A straightforward calculation shows that given such a solution $\zeta$ there are seven other distinct conjugate solutions; we can multiply $\zeta$ by $i, i^2$ and $i^3$ (the units of $\Z[i]$ other than 1) and then we can take the complex conjugates of these four solutions. We illustrate this in Figure \ref{fig:complexpythagoras}; note without loss of generality given any Pythagorean triple not associated to a unit of $\Z[i]$ we may always adjust it, through multiplication by a unit and complex conjugation if needed, so that it lies in the shaded region (i.e., the second octant, or the part of the first quadrant where the imaginary part exceeds the real part).

\begin{figure}[h]
\begin{center}
\scalebox{1}{\includegraphics{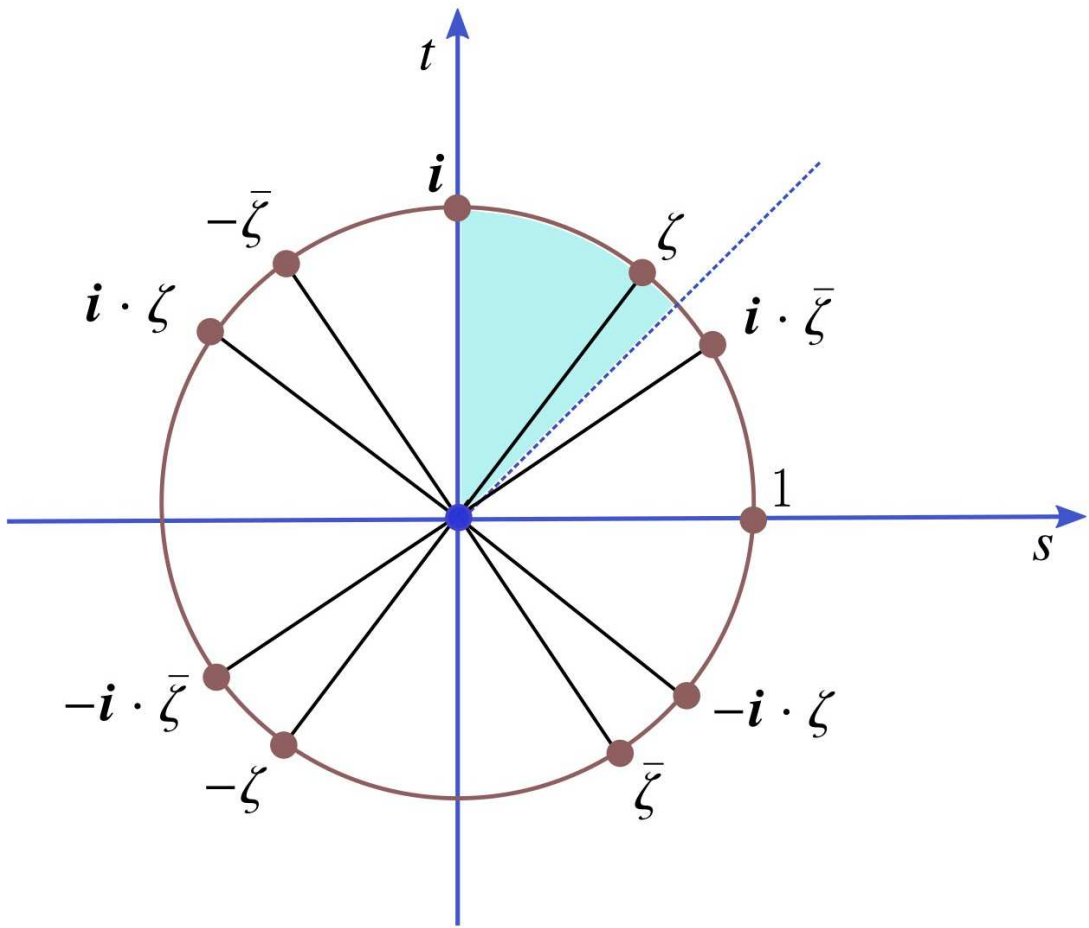}}
\caption{\label{fig:complexpythagoras} The four trivial solutions ($1, i, -1, -i$) and the seven conjugates to a non-trivial solution $\zeta$, which can be taken to lie in the second octant. Image from \cite{Ye}.}
\end{center}\end{figure}

Identifying Pythagorean triples with complex numbers yields a commutative group through the multiplicativity of the norm. While rescaling a Pythagorean triple by $k$ does not change the complex number associated to it, multiplying associated complex numbers (or raising one to a power) generates new solutions. For example, the triple $(3,4,5)$ yields $\zeta_{3,4,5} = 3/5 + i 4/5$, and \be \zeta_{3,4,5}^2 \ = \ \left(\frac34 + \frac45 i\right) \left(\frac34 + \frac45 i\right) \ = \ -\frac{7}{25} + \frac{24}{25}i,\ee  which corresponds to the triple $(7, 24, 25)$, while \be  \zeta_{3,4,5} \ \ \zeta_{5,12,13} \ = \ \left(\frac35 + \frac45 i\right) \left(\frac{5}{13} + \frac{12}{13} i\right) \ = \ -\frac{33}{65} + \frac{56}{65} i, \ee  which corresponds to the triple $(33, 56, 65)$.

%%cmult[a_, b_, c_, d_, e_, f_] \ := \  (a/c + I b/c) (d/f + I e/f)

Yekutieli \cite{Ye} proved several results about the structure of the group of rational solutions to the unit circle version of the Pythagorean equation. Specifically, denote these solutions by \begin{equation} G(\Q) \ := \ \{x + iy: x, y \in \Q \ {\rm and}\ x^2 + y^2 = 1\}. \end{equation} This is a group under complex multiplication, and decomposes as \begin{equation} G(\Q) \ = \ U \times F, \end{equation} where $U = \{1, i, -1, -i\}$ is the units in $\Z[i]$ and $F$ is a free abelian group with basis given by the collection $\{\zeta_p\}_{p \in P_1}$, where the primes $P$ decompose as \begin{equation} P \ = \ P_1 \sqcup P_2 \sqcup P_3, \ \ \ {\rm with} \ \ \ P_{\ell} \ := \ \{p \in P: p \equiv \ell \bmod 4\}.\end{equation} He then proves results on which $c$ yield Pythagorean triples, and how many there are.

Our goal is to generalize these results and describe the structure of solutions to $x^2 + D y^2 = z^2$ for square-free $D$ (there is no loss in generality in having a positive sign, as $x^2 + D y^2 = z^2$ is the same as $z^2 - D y^2 = x^2$). In particular, we are interested in seeing how the structure of $\Z[\sqrt{-D}]$ influences the solutions; one way to measure this structure is through its class number. The proofs in \cite{Ye} crucially use that $\Z[i] = \Z[\sqrt{-1}]$ has class number 1. There are 8 other square-free $D$ such that $\Z[\sqrt{-D}]$ has class number 1; the complete set (see \cite{Wa}) is \be -D \ = \ \{-1,\ \ -2,\ \ -3,\ \ -7,\ \ -11,\ \ -19,\ \ -43,\ \ -67,\ \ -163\}.\ee 

For suitably restricted $D$, we can generalize the method in \cite{Ye}. First we define \textit{normalized solutions} to an arbitrary Pell equation as follows.

\begin{definition}\label{def:normsoln}
A solution $(a,b,c)$ with $a,b,c\in \N$ to 
\begin{equation}
    x^2+Dy^2\ = \ z^2 \label{eq:pelleq}
\end{equation} is defined to be a \textbf{normalized solution} if $\gcd(a,b,c)=1$.
\end{definition}

We also define
\be G_D(\Q) \ :=\ \{a + b\sqrt{-D} \in \Q[\sqrt{-D}] : a^2 + Db^2 = 1\}, \ee 
and, with some restrictions on $D$, prove that $G_D(\Q) = U\times F$ where $U :=\{1,-1\}$ and $F$ is a free abelian group, which allows us to determine the number of normalized solutions of the form $(a,b,c)$ to the equation \eqref{eq:pelleq} for any given $c\in \N$.

We do this by deriving three theorems which describe the factorization of elements in $G_D(\mathbb{Q})$ and how it relates to the number of normalized solutions of the equation $x^2 + Dy^2 = c^2$. Our generalization depends on properties of the class group, which leads to restrictions on what $D$ we can analyze. 

%%%\fixed{the comment on difficulty}
Generalization from $D=1$ to an arbitrary $D>0$ is difficult as the ring $\Z[\sqrt{-D}]$ is not necessarily a unique factorization domain, or even a Dedekind domain, and hence the factorization of the elements of $\Z[\sqrt{-D}]$ into primes or irreducibles can be complicated (and sometimes not possible). Thus unlike the case of $D=1$, the factorization of the elements of $G_D(\Q)$ are no longer automatically inherited from the factorization of the elements of $\Z[\sqrt{-D}]$.

%%%\fixed{end of the comment}
%%%\fix{TCM: good comment}

We recall some definitions and results on class groups; see Chapters 2 and 3 of \cite{Cox} for details. Given a $K<0$, the class group $C(K)$ is the set of equivalence classes $P/\sim$ together with the operation of Dirichlet composition,  where
\be P \ := \ \{\text{primitive, positive-definite forms with discriminant K}\}.\ee  For two binary quadratic forms $f=[a,b,c]$ and $g=[a',b',c']$, $f \sim g$ if and only if there exists a matrix $A =\mattwo{p}{q}{r}{s} \in \text{SL}_2(\Z)$ such that $g(x,y) = f^A(x,y) := f(px+qy,rx+sy)$. The identity element of this class is
\begin{equation}
\text{Identity of C(K)} \ = \
    \begin{cases}
        \text{the class containing }[1,0,\frac{-K}{4}], & \text{if } K\equiv 0\ (\text{mod 4}),\\
        \text{the class containing }[1,1,\frac{1-K}{4}], & \text{if } K\equiv 1\ (\text{mod 4}).
    \end{cases}
\end{equation}
The inverse of the class containing $[a,b,c]$ is the class containing $[a,-b,c]$.\\ \

The following is our main result.

\begin{thm}\label{thm:mainresult}  Assume $-D \equiv 2$ or $3$ $(\text{mod } 4)$ and $D > 1$. Suppose the class group of $\mathbb{Q}[\sqrt{-D}]$ is a free $\Z_2$-module.
Then $G_D(\Q)=U\times F$, where $U=\{\pm 1\}$ and $F$ is a free abelian group. If $c = p_1^{n_1}\cdots p_k^{n_k}$ such that $\leg{-D}{p_i}  = 1$ for all $1\le i \le k$, then the number of normalized solutions of the form $(a,b,c)$ is $2^{k-1}$. Otherwise, there are no normalized solutions of the form $(a, b, c)$. \end{thm}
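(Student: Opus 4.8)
The plan is to run the entire argument through the ideal theory of $\mathcal{O} := \Z[\sqrt{-D}]$, which under $-D \equiv 2,3\ (\mo 4)$ is the full ring of integers of $\Q[\sqrt{-D}]$, so that every nonzero ideal factors uniquely into prime ideals and the class group behaves as recalled above. I would first settle $G_D(\Q)=U\times F$. Multiplicativity of $N(a+b\sqrt{-D})=a^2+Db^2$ makes $G_D(\Q)$ a commutative group with $\zeta^{-1}=\ol\zeta$, so only the torsion and a free complement are at issue. Any torsion element is a root of unity of $\Q[\sqrt{-D}]$; since $D>1$ excludes $\Q[\sqrt{-1}]$ and $-D\equiv 2,3\ (\mo 4)$ excludes $\Q[\sqrt{-3}]$, these are just $\pm1$, giving $U=\{\pm1\}$. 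For the free complement I would attach to each split prime $p$ (those with $\leg{-D}{p}=1$, i.e. $p\mathcal{O}=\mathfrak p\ol{\mathfrak p}$ with $\mathfrak p\ne\ol{\mathfrak p}$) a canonical norm-one element $\zeta_p$ and, using the factorization results developed for $G_D(\Q)$, show every $\zeta\in G_D(\Q)$ is up to sign a unique finite product of integer powers of the $\zeta_p$. This exhibits $F\cong\bigoplus_{p\ \mathrm{split}}\Z$ as free abelian.

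For the enumeration, to a normalized solution $(a,b,c)$ I attach $\alpha:=a+b\sqrt{-D}$ of norm $c^2$ and pass to the ideal identity $(\alpha)(\ol\alpha)=(c)^2$. Primitivity $\gcd(a,b,c)=1$ is equivalent to $(\alpha)$ and $(\ol\alpha)$ being coprime, since a shared prime divisor would force a rational prime to divide both $a$ and $b$. Testing each $p\mid c$: if $p$ is inert or ramified the prime(s) above it are self-conjugate, so the valuations of $(\alpha)$ and $(\ol\alpha)$ there agree, and being forced positive by $p\mid c$ this breaks coprimality (a power of $p$ would divide $\alpha$). Hence a normalized solution exists only if every $p_i$ splits, i.e. $\leg{-D}{p_i}=1$ for all $i$, which is the ``otherwise'' clause.

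When all $p_i$ split, write $p_i\mathcal{O}=\mathfrak p_i\ol{\mathfrak p}_i$. Coprimality pushes the entire exponent $2n_i$ of $p_i$ in $(c)^2$ onto one prime above $p_i$, so $(\alpha)=\prod_{i=1}^k \mathfrak q_i^{\,2n_i}$ with $\mathfrak q_i\in\{\mathfrak p_i,\ol{\mathfrak p}_i\}$, giving $2^k$ choices of ideal. The free $\Z_2$-module hypothesis is decisive here: every ideal class has order dividing $2$, so $[\mathfrak q_i]^{2n_i}=([\mathfrak q_i]^2)^{n_i}$ is trivial, each of the $2^k$ ideals is principal, and hence each is realized by an $\alpha$ that is unique up to the unit factor $\pm1$. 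Quotienting by the symmetries that preserve the triple, namely $\alpha\mapsto-\alpha$ (which fixes the ideal) and $\alpha\mapsto\ol\alpha$ (which conjugates every $\mathfrak q_i$ and leaves $(|a|,|b|,c)$ unchanged), and noting that $\mathfrak p_i\ne\ol{\mathfrak p}_i$ makes conjugation a fixed-point-free involution on the $2^k$ ideals, I obtain exactly $2^k/2=2^{k-1}$ distinct normalized solutions.

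The main obstacle I anticipate is this principality-and-counting interface: one must simultaneously verify that the $\Z_2$-module condition renders all $2^k$ ideal choices principal and that the only coincidences among the resulting elements are the sign and conjugation symmetries, so that distinct conjugate pairs of ideals yield genuinely distinct triples and the count is exactly $2^{k-1}$. This leans on unique factorization of ideals in $\mathcal{O}$ and on $D>1$ collapsing the unit group to $\{\pm1\}$. Secondary technical points are handling $p=2$ uniformly inside $\leg{-D}{2}$ and recording the tacit restriction $k\ge1$ (equivalently $c>1$) that makes $2^{k-1}$ an integer.
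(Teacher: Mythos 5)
Your proposal is correct, but it takes a genuinely different route from the paper's proof. The paper never invokes unique factorization of ideals: its existence criterion (Lemma \ref{lem:solexistence}) is proved with binary quadratic forms following Cox --- Hensel's lemma lifts the condition $\leg{-D}{p_i}=1$ to prime powers, so $c$ is properly represented by some primitive form $f$ of discriminant $-4D$; Dirichlet composition gives $f^2 \sim [c^2,\alpha',\beta']$; and the free-$\Z_2$-module hypothesis forces $[f]^2$ to be the identity class $[1,0,D]$, so that $x^2+Dy^2$ properly represents $c^2$. The factorization $G_D(\Q)=U\times F$ is then obtained from an explicit element-level divisibility lemma in $\Z[\sqrt{-D}]$ (Lemma \ref{lem:division}) together with uniqueness of the elementary solutions $\zeta_p$ (Lemma \ref{lem:definezeta}), rather than from ideal factorization; and the count $2^{k-1}$ (Theorem \ref{thm:numsols}) comes from identifying the solution set with $T_2=\bigl\{\pm\zeta_{p_1}^{\epsilon_1 n_1}\cdots\zeta_{p_k}^{\epsilon_k n_k}\bigr\}$ and quotienting by the order-$4$ group generated by $-1$ and conjugation --- exactly the orbit count you perform on the generators of your $2^k$ ideals, using that the unit group is $\{\pm 1\}$ and that conjugation acts without fixed points. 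Your use of the hypothesis sits at the same logical juncture as the paper's (triviality of $[\mathfrak{q}_i]^{2n_i}$ versus triviality of $[f]^2$), so the two arguments are translations of one another across the forms/ideals dictionary. What your version buys is economy and transparency: unique factorization of ideals in the maximal order replaces both Hensel's lemma and the hand-made division lemma, and since the discriminant is $-4D$ the prime $2$ always ramifies, so your inert/ramified exclusion yields the paper's Lemma \ref{lem:oddsols} (that $c$ must be odd) for free. What the paper's version buys is elementarity: it stays at the level of integers and forms and never needs $\Z[\sqrt{-D}]$ to be a Dedekind domain. One small repair to your write-up: the symbol $\leg{-D}{2}$ is not defined and no uniform treatment of $p=2$ through it is needed, precisely because $2$ ramifies; also, your structure-theorem paragraph leans on ``the factorization results developed for $G_D(\Q)$,'' which in your framework should be spelled out as: $\mathfrak{p}^2=(\pi_p)$ is principal by the hypothesis, $\zeta_p:=\pi_p/p$, and uniqueness of the exponents in $z=\pm\prod\zeta_{p_i}^{\pm n_i}$ follows from unique ideal factorization plus the triviality of the unit group.
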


\begin{rek} As the case for $D = 1$ is known (see \cite{Ye}), we only consider $D > 1$. It is worth noting that the argument for $D = 1$ is slightly different, because the group of units for $\Z[i]$ is $\{\pm 1, \pm i\}$ while the group of units for $\Z[\sqrt{-D}]$ when $D > 1$ is just $\{\pm 1\}$. Also, the definition of normalized solutions must be altered to account for the fact that if $(a,b,c)$ is a solution then so is $(b,a,c)$, which is not true for $D > 1$. See Remark $\ref{rek: D=1}$ for greater detail on the case of $D = 1$. 
\end{rek}

After recalling needed facts, we show that if we assume the hypotheses of Theorem $\ref{thm:mainresult}$, then $c$ is a normalized solution to the equation $x^2 + Dy^2$. The fact that each element of the class group has order at most 2 is crucial here for the following reason. The integer $c$ is properly represented by some binary quadratic form. Therefore, if every element of $C(-4D)$ has order at most 2, then $c^2$ is properly represented by the identity element of $C(-4D)$, which is the form $x^2 + Dy^2$. So, there exists a normalized solution to the equation $(a,b,c)$ for some $a,b \in \Z$. If it is the case that there exists $f \in C(-4D)$ such that $|f| > 2$, then there may exist an integer $c$ such that $c$ is represented by $f$ but $c^2$ is not represented by the identity element. In this case $c$ might not be a solution to the equation even though it satisfies $\left(\frac{-D}{p}\right) = 1$ for all prime factors $p$ of $c$. We refer to the remarks at Section 5 for a concrete example.

Next, we prove that $G_D(\mathbb{Q})$ factors into the direct product of the group of units of $\Z[\sqrt{-D}]$ and a free abelian group. Finally, this factorization allows us to determine the number of solutions to $x^2 + Dy^2 = z^2$ for a fixed integer $z$. We conclude with examples of these theorems, as well as cases where the theorem does not hold (e.g., when $C(-4D)$ is not a free $\Z_2$-module).

%%%%%%%%%%%%%%%%%%%%%%%%%%%%%%%%%%%%%%%%%%%%%%%%%%%%%%%%%%%%%%%%%%%%%%%%%%%%%%%%%%%%%%%%%%%%%%%%%%%%%%%%%%%%%%%%%%%%%%%%%%%%%%%%%%%%%%%%%%%%%%%%
%%%%%%%%%%%%%%%%%%%%%%%%%%%%%%%%%%%%%%%%%%%%%%%%%%%%%%%%%%%%%%%%%%%%%%%%%%%%%%%%%%%%%%%%%%%%%%%%%%%%%%%%%%%%%%%%%%%%%%%%%%%%%%%%%%%%%%%%%%%%%%%%
%%%%%%%%%%%%%%%%%%%%%%%%%%%%%%%%%%%%%%%%%%%%%%%%%%%%%%%%%%%%%%%%%%%%%%%%%%%%%%%%%%%%%%%%%%%%%%%%%%%%%%%%%%%%%%%%%%%%%%%%%%%%%%%%%%%%%%%%%%%%%%%%
\section{Units and Complex Multiplication}

We start by determining the group operations for \be G_D(\Q)\ := \ \{z=a+b\sqrt{-D}\in \Q[\sqrt{-D}]: a^2+Db^2=1\}.\ee
The norm on $\Q[\sqrt{-D}]$ is \be N(a+b\sqrt{-D})\ :=\ a^2+Db^2\ee and is multiplicative: if $a_1 + b_1\sqrt{-D}, a_2 + b_2\sqrt{-D} \in G_D(\mathbb{Q})$, then  \be N\left((a_1 + b_1\sqrt{-D})\cdot (a_2 + b_2\sqrt{-D})\right)\ =\ N\left(a_1 + b_1\sqrt{-D}\right) \cdot N\left(a_2 + b_2\sqrt{-D}\right)\ =\ 1.\ee  Thus the product is an element of $G_D(\Q)$. The inverse of any $a + b\sqrt{-D} \in G_D(\Q)$ is given by $\frac{a - b\sqrt{-D}}{a^2+Db^2}$. The group of units for the ring $\Z[\sqrt{-D}]$ are those elements $x + y\sqrt{-D}$ where $x,y \in \Z$ such that $N(x + y\sqrt{-D}) = 1$. This corresponds to those elements such that $x^2 + Dy^2 = 1$. As $D > 1$ the only integer solutions to this equation are $\{\pm 1\}$, so the group of units is $U = \{\pm 1\}$.

The group $G_D(\Q)$ can be geometrically viewed as the rational points on the ellipse with rational co-ordinate $x^2+Dy^2=1$. Given two points $(x_1,y_1)$ and $(x_2,y_2)$ on this ellipse, we can multiply them as follows:
\begin{equation}
    (x_1,y_1)*(x_2,y_2)\ := \ (x_1x_2-Dy_1y_2,\ x_1y_2+x_2y_1), \label{Elliptic Multiplication}
\end{equation} which yields another rational point on this ellipse. Note that each such rational point on this ellipse corresponds to a unique normalized solution to the equation $x^2+Dy^2=z^2$ up to the sign, since by definition normalized solutions are positive. Our aim is to find \textit{elementary normalized solutions} so that we can generate any normalized solution by multiplying (as above) these elementary normalized solutions, similar to building composite numbers by multiplying prime numbers.

As multiplying normalized solutions using the above rule does not always yield a normalized solution, we work with the \textit{elementary solutions}. We see this in detail while studying Lemma \ref{lem:multsols} and Theorem \ref{thm:retrievesols}.

%%%%%%%%%%%%%%%%%%%%%%%%%%%%%%%%%%%%%%%%%%%%%%%%%%%%%%%%%%%%%%%%%%%%%%%%%%%%%%%%%%%%%%%%%%%%%%%%%%%%%%%%%%%%%%%%%%%%%%%%%%%%%%%%%%%%%%%%%%%%%%%%
%%%%%%%%%%%%%%%%%%%%%%%%%%%%%%%%%%%%%%%%%%%%%%%%%%%%%%%%%%%%%%%%%%%%%%%%%%%%%%%%%%%%%%%%%%%%%%%%%%%%%%%%%%%%%%%%%%%%%%%%%%%%%%%%%%%%%%%%%%%%%%%%
%%%%%%%%%%%%%%%%%%%%%%%%%%%%%%%%%%%%%%%%%%%%%%%%%%%%%%%%%%%%%%%%%%%%%%%%%%%%%%%%%%%%%%%%%%%%%%%%%%%%%%%%%%%%%%%%%%%%%%%%%%%%%%%%%%%%%%%%%%%%%%%%
\section{Group Structure on the Rational Solutions on Ellipse}

In this section we give a \textit{structure theorem} for the set \be G_D(\Q)\ := \ \{z=a+b\sqrt{-D}\in \Q[\sqrt{-D}]: a^2+Db^2=1\}.\ee 
We prove that $G_D(\Q)$ is of the form $U\times F$ where $U:=\{1,-1\}$ and $F$ is a free abelian group provided that $-D\equiv 2,3\ (\text{mod }4)$ and $C(-4D)$ is a free $\Z_2$-module.

We begin with the following results necessary for factoring $G_D(\mathbb{Q})$.

\subsection{Conditions for Existence of Solutions}

\begin{lemma}
\label{lem:oddsols}
Given a $D>0$ such that $-D\equiv 2,3\ (\text{mod }4)$, if $(a,b,c)$ is a normalized solution to
\begin{equation}
    x^2+Dy^2\ = \ z^2, \label{eq:pelleqrepeated}
\end{equation}
then $c$ must be an odd natural number.\par\vspace{0.15cm}
\end{lemma}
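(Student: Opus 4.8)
The plan is to argue by contradiction using only the parity structure of the equation modulo $4$. First I would rewrite the hypothesis $-D \equiv 2, 3 \pmod 4$ in the equivalent form $D \equiv 2 \pmod 4$ or $D \equiv 1 \pmod 4$, since $-2 \equiv 2$ and $-3 \equiv 1 \pmod 4$. The whole argument then reduces to examining $a^2 + Db^2 = c^2$ modulo $4$, using the standard fact that every integer square is congruent to $0$ or $1 \pmod 4$ (namely $0$ if the integer is even and $1$ if it is odd).

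Suppose toward a contradiction that $c$ is even, so that $c^2 \equiv 0 \pmod 4$. Reducing $a^2 + Db^2 = c^2$ modulo $4$ then forces $a^2 + Db^2 \equiv 0 \pmod 4$, and I would run a short case analysis on the two admissible residues of $D$. When $D \equiv 1 \pmod 4$ the congruence becomes $a^2 + b^2 \equiv 0 \pmod 4$; checking the four parity combinations of $a$ and $b$ shows this holds only when both $a$ and $b$ are even (a single odd term contributes $1$, two odd terms contribute $2$). When $D \equiv 2 \pmod 4$ the congruence becomes $a^2 + 2b^2 \equiv 0 \pmod 4$, and again testing parities shows the sum is $1$, $2$, or $3 \pmod 4$ unless both $a$ and $b$ are even.

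In either case the assumption that $c$ is even forces both $a$ and $b$ to be even, so $2 \mid \gcd(a,b,c)$, contradicting the normalization $\gcd(a,b,c) = 1$. Therefore $c$ must be odd, as claimed. The computation is entirely elementary, and there is no serious obstacle; the only point demanding care is the bookkeeping of the two residue classes of $D$ and the verification that every mixed-parity choice of $a$ and $b$ fails to produce a multiple of $4$, which is exactly what rules out an even hypotenuse while respecting the coprimality condition.
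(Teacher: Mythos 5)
Your proof is correct and follows essentially the same approach as the paper: assume $c$ is even, reduce $a^2 + Db^2 = c^2$ modulo $4$, use the fact that squares are $0$ or $1 \pmod 4$ to force $a$ and $b$ both even, and contradict $\gcd(a,b,c)=1$. Your explicit case analysis on $D \equiv 1$ versus $D \equiv 2 \pmod 4$ is in fact slightly more careful than the paper's proof, which asserts the key step without spelling out the two cases.
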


\begin{proof} We have $a^2+Db^2=c^2$. Assume $2\mid c$. Then $4\mid a^2+Db^2$. Note that $a^2\equiv 0,1\ (\text{mod }4)$ and $b^2 \equiv 0,1\ (\text{mod }4)$. Therefore, since $4\mid a^2+Db^2$, we have $b^2 \equiv a^2 \equiv 0\ (\text{mod }4)$. This implies $2\mid a, b, c$, which contradicts the fact $(a,b,c)$ is a normalized solution. 
\end{proof}

Since we are focusing our attention on the case when $-D\equiv 2,3\ (\text{mod }4)$, by Lemma $\ref{lem:oddsols}$ we are only concerned with normalized solutions $(a,b,c)$ when $c$ is odd.

The next result determines when $x^2 + Dy^2 = z^2$ has a normalized solution for fixed $z$. In all arguments below $\leg{a}{p}$ represents the Legendre symbol; it is 1 if $a$ is a non-zero square modulo $p$, 0 if $a$ is congruent to zero modulo $p$, and -1 otherwise.
    
The following lemma is important for determining when a normalized solution to (\ref{eq:pelleqrepeated}) exists. We use Hensel's Lemma to prove it, and for completeness, we state it below.

\begin{lemma}[Hensel's Lemma]\label{lem:hensel}
    Let $f(x)$ be a polynomial with integer coefficients. Let $k$ be a positive integer, and $r$ an integer such that $f(r) \equiv 0$ (mod $p^k$). Suppose $m\le k$ is a positive integer. Then, if $f'(r) \ne 0$ (mod $p$), there is an integer $s$ such that $f(s) \equiv 0$ (mod $p^{k+m})$ and $s \equiv r$ (mod $p^k$). 
\end{lemma}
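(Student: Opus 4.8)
The plan is to prove Hensel's Lemma by the standard lifting-and-Taylor-expansion argument, searching for the desired root $s$ among integers of the shape $s = r + tp^k$ with $t$ to be determined. Any such $s$ automatically satisfies $s \equiv r \pmod{p^k}$, so the entire task reduces to choosing $t$ so that $f(s) \equiv 0 \pmod{p^{k+m}}$.

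First I would record the integrality of the formal Taylor coefficients of $f$. Writing $f(x) = \sum_n c_n x^n$ with $c_n \in \Z$ and expanding $(r+h)^n$ by the binomial theorem shows that $f(r+h) = f(r) + f'(r)h + g_2 h^2 + g_3 h^3 + \cdots$, where each coefficient $g_j = \sum_n c_n \binom{n}{j} r^{n-j}$ is an integer. Substituting $h = tp^k$ then yields
\be f(r + tp^k)\ =\ f(r) + f'(r)\,t\,p^k + \sum_{j \ge 2} g_j\, t^j\, p^{jk}. \ee

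Next I would exploit the hypothesis $m \le k$, which is the crux of the argument. For every $j \ge 2$ we have $jk \ge 2k \ge k + m$, so each term $g_j t^j p^{jk}$ with $j \ge 2$ is divisible by $p^{k+m}$ and hence vanishes modulo $p^{k+m}$. Thus $f(r + tp^k) \equiv f(r) + f'(r)\,t\,p^k \pmod{p^{k+m}}$. Since $p^k \mid f(r)$, I write $f(r) = p^k a$ with $a \in \Z$, so that the target congruence $f(s) \equiv 0 \pmod{p^{k+m}}$ becomes $p^k\bigl(a + f'(r)\,t\bigr) \equiv 0 \pmod{p^{k+m}}$, i.e. the linear congruence $a + f'(r)\,t \equiv 0 \pmod{p^m}$.

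Finally, because $p \nmid f'(r)$ (and $p$ is prime), $f'(r)$ is a unit modulo $p^m$, so the congruence $f'(r)\,t \equiv -a \pmod{p^m}$ is solved by $t \equiv -a\,f'(r)^{-1} \pmod{p^m}$; taking this $t$ and setting $s = r + tp^k$ produces the required root. The only genuinely load-bearing step is the vanishing of the higher-order Taylor terms, and this is exactly where $m \le k$ enters: without it the quadratic term $g_2 t^2 p^{2k}$ need not be divisible by $p^{k+m}$, so the problem would no longer collapse to a single linear congruence in $t$ and the simultaneous requirements on $f(s)$ and on $s \bmod p^k$ could not be met by this one-step lift.
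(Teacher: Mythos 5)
Your proof is correct: the one-step lift $s = r + tp^k$, the integrality of the Taylor coefficients $g_j$, the observation that $jk \ge 2k \ge k+m$ for $j \ge 2$ (which is exactly where the hypothesis $m \le k$ is used), and the solvability of the resulting linear congruence because $f'(r)$ is a unit modulo $p^m$ are all valid. Note that the paper states Hensel's Lemma without proof, as a standard classical fact quoted only for completeness, so your argument --- which is the standard lifting proof --- simply supplies the proof the paper omits.
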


\begin{lemma}\label{lem:solexistence}
Suppose $C(-4D) \cong (\Z_2)^n$ for some $n\geq 0$ such that $-D = 2,3\ (\text{mod }4)$ . Let $c = p_1^{n_1}\cdots p_k^{n_k}$ be an odd positive integer. There exists a normalized solution $(a,b,c)$ to $x^2 + Dy^2 = z^2$ if and only if $\leg{-D}{p_i}=1$ for $1 \le i \le k$.
\end{lemma}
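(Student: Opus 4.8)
The plan is to prove the equivalence in two directions: \emph{necessity} (a normalized solution forces each Legendre symbol to be $1$) by an elementary congruence argument, and \emph{sufficiency} through the theory of proper representation by binary quadratic forms, where the hypothesis $C(-4D)\cong(\Z_2)^n$ does the essential work. For necessity, suppose $(a,b,c)$ is a normalized solution and fix a prime $p=p_i\mid c$, so that $a^2+Db^2\equiv 0\ (\mathrm{mod}\ p)$. First I would show $p\nmid b$: if $p\mid b$ then $a^2\equiv 0\ (\mathrm{mod}\ p)$, so $p\mid a$, whence $p\mid\gcd(a,b,c)$, contradicting normalization. With $b$ invertible mod $p$ we get $(ab^{-1})^2\equiv -D\ (\mathrm{mod}\ p)$, so it only remains to exclude $\leg{-D}{p}=0$, i.e.\ $p\mid D$. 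But if $p\mid D$, then (as $D$ is square-free) $v_p(Db^2)=1$ while $v_p(a^2)\ge 2$, forcing $v_p(a^2+Db^2)=1$ to be odd, contradicting that $v_p(c^2)$ is even. Hence $\leg{-D}{p_i}=1$ for every $i$.

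For sufficiency, assume $\leg{-D}{p_i}=1$ for all $i$. Since each $p_i$ is odd and $-D\not\equiv 0\ (\mathrm{mod}\ p_i)$, the polynomial $f(x)=x^2+D$ has a root $r$ modulo $p_i$ with $f'(r)=2r\not\equiv 0\ (\mathrm{mod}\ p_i)$, so Lemma \ref{lem:hensel} lifts it to a root modulo $p_i^{n_i}$; applying the Chinese Remainder Theorem across the $p_i$ produces an integer $v$ with $v^2\equiv -D\ (\mathrm{mod}\ c)$. Setting $b=2v$ gives $b^2\equiv -4D\ (\mathrm{mod}\ 4c)$, so $-4D$ is a quadratic residue modulo $4c$, and the standard criterion (see \cite{Cox}) yields a primitive form $g_0=[c,\,2v,\,(v^2+D)/c]$ of discriminant $-4D$ that properly represents $c$.

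Now I would invoke the structural hypothesis. The class $[g_0]\in C(-4D)$ satisfies $[g_0]^2=[\mathrm{id}]$, since every element of $(\Z_2)^n$ has order at most $2$, and for $-4D\equiv 0\ (\mathrm{mod}\ 4)$ the identity class is represented by the principal form $[1,0,D]$, namely $x^2+Dy^2$. As composing $g_0$ with itself gives a form representing $c\cdot c=c^2$, it follows that the principal form represents $c^2$. To secure a \emph{normalized} (proper) representation I would pass through the ideal dictionary: each $p_i$ splits, $\gcd(c,4D)=1$, so there is a primitive ideal $\mathfrak a$ of norm $c$ in the class $[g_0]$, and $\mathfrak a^2$ is a primitive principal ideal of norm $c^2$, say $\mathfrak a^2=(a+b\sqrt{-D})$. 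Primitivity gives $\gcd(a,b)=1$, hence $\gcd(a,b,c)=1$ and $a^2+Db^2=c^2$; taking absolute values and noting that $a=0$ or $b=0$ is impossible for square-free $D>1$ and $c>1$ delivers the desired normalized solution.

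The heart of the argument, and the step I expect to require the most care, is the passage from ``$c$ is properly represented by some form $g_0$'' to ``$c^2$ is properly represented by the principal form.'' This is precisely where the hypothesis $C(-4D)\cong(\Z_2)^n$ is indispensable: without it, $[g_0]^2$ need not be trivial, and an integer meeting all the Legendre conditions can fail to be an actual solution, as the authors observe for class numbers exceeding $2$. I also anticipate that the bookkeeping ensuring the representation of $c^2$ is \emph{proper} — so that $\gcd(a,b)=1$ and we obtain a solution for $c$ itself rather than a rescaled solution for a proper divisor of $c$ — is what forces the ideal-theoretic refinement above, since bare representability by the principal form does not by itself guarantee normalization.
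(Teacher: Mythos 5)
Your proof is correct, and for most of its length it runs parallel to the paper's own argument: the necessity direction is the same elementary congruence computation (the paper rules out $p \mid D$ via a gcd-and-square-free argument where you use $p$-adic valuations; your version is fine, though you should add the one-line justification that $p \mid c$ and $p \mid D$ force $p \mid a$, whence $v_p(a^2) \ge 2$), and the sufficiency direction agrees with the paper through Hensel's Lemma, CRT, and the production of a primitive form $g_0$ of discriminant $-4D$ properly representing $c$ (the paper quotes Cox's Lemma 2.5 rather than writing the form explicitly). The genuine divergence is in how a \emph{proper} representation of $c^2$ by the principal form is extracted, which both you and the paper identify as the crux. The paper stays entirely inside composition of forms: it replaces $g_0$ by an equivalent form $[c,\alpha,\beta]$ (Cox Lemma 2.3), checks $(\alpha,c)=1$ so that Dirichlet composition of this form with itself is defined, obtains $g_0^2 \sim [c^2,\alpha',\beta']$, uses the hypothesis to conclude $[1,0,D] \sim [c^2,\alpha',\beta']$, and applies Lemma 2.3 once more. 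You instead cross to the ideal side: since $-D \equiv 2,3 \pmod 4$, $\Z[\sqrt{-D}]$ is the maximal order of discriminant $-4D$, so $C(-4D)$ is isomorphic to its ideal class group; each $p_i$ splits, the product of one prime ideal above each $p_i$ (to the appropriate powers) is a primitive ideal $\mathfrak{a}$ of norm $c$, the hypothesis makes $\mathfrak{a}^2$ principal, and unique factorization of ideals gives both primitivity of $\mathfrak{a}^2$ and $\gcd(a,b)=1$ for a generator $a+b\sqrt{-D}$. Your route buys a more transparent handle on primitivity and normalization (it falls out of Dedekind factorization rather than composition bookkeeping), at the cost of invoking the form--ideal dictionary, a heavier tool than anything the paper uses. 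Two small observations: first, once you argue through ideals, your entire first half becomes redundant, since $\leg{-D}{p_i}=1$ already says $p_i$ splits, which hands you $\mathfrak{a}$ directly without Hensel, CRT, or $g_0$; second, you do not need (and should not claim without proof) that $\mathfrak{a}$ lies in the class corresponding to $[g_0]$ --- any primitive ideal of norm $c$ works, because under your hypothesis every class squares to the identity.
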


\begin{proof}  First suppose $\leg{-D}{p_i} =1$ for all $1 \le i\le k$. We need to show that $-D$ is a quadratic residue modulo $c$. Fix some prime $p_i$ and consider the polynomial $f(x) = x^2 + D$. Since $-D$ is a non-zero quadratic residue modulo $p_i$, we know that there exists an $r \in \Z$ such that 
$r\ne 0$ and $f(r) = r^2 + D \equiv 0$ (mod $p_i$). Also, $f'(x) = 2x$, so since $p_i$ is an odd prime, if $f'(r) = 2r \equiv 0$ (mod $p_i$), then $r \equiv 0$ (mod $p_i$). This implies that $D \equiv 0$ (mod $p_i$), which is a contradiction. Therefore, we can apply Hensel's Lemma to obtain some $s \in \Z$ such that $f(s) = s^2 + D \equiv 0$ (mod $p_i^2$). Now suppose that for some $k < n_i$, and some $s_k \in \Z$ such that $s_k \equiv r$ (mod $p_i$) we have that $f(s_k) = s_k^2 + D \equiv 0$ (mod $p_i^k$). Then we have that $f'(s_k) = 2 s_k \ne 0$ (mod $p_i^k$), since $s_k \equiv r$ (mod $p_i$) and $r \ne 0$ (mod $p_i$). If $s_k \equiv 0$ (mod $p_i^k)$, then $s_k \equiv 0$ (mod $p_i$), which is a contradiction. Therefore, by applying Hensel's Lemma again, we obtain $s_{k+1} \in \Z$ such that $f(s_{k+1}) = (s_{k+1})^2 + D \equiv 0$ (mod $p^{k+1})$, and $s_{k+1} \equiv s_k$ (mod $p_i^k$), so $s_{k+1} \equiv r$ (mod $p_i$). By this inductive process, we get a solution to $f(x) = x^2 + D\equiv 0$ (mod $p_i^{n_i}$). This implies that $-D$ is a quadratic residue modulo $p_i^{n_i}$.

For each $i \ne j$, $p_i^{n_i}$ and $p_j^{n_j}$ are coprime. Also, $-D$ is a quadratic residue modulo $p_i^{n_i}$ and $-D$ is a quadratic residue modulo $p_j^{n_j}$ by the above. Therefore, $-D$ is a quadratic residue modulo $p_i^{n_i}p_j^{n_j}$. Thus, $-4D$ is a quadratic residue modulo $c$.  Then by Lemma 2.5 of \cite{Cox}, $c$ is properly represented by a primitive form $f$ of discriminant $-4D$. Now if we apply Lemma 2.3 of \cite{Cox}, we get $f \sim [c, \alpha, \beta]$ for some integers $\alpha, \beta$.

Note that $\alpha^2 - 4c\beta = -4D$. Suppose for contradiction that $(\alpha, c) > 1$. Then this implies that $(\alpha, c) | c$ and $(\alpha, c) | D$. However, since $\leg{-D}{p_i} =1$ for all $i\le k$, $c$ and $D$ are coprime. This is a contradiction, so $(\alpha, c) = 1$. Therefore, $(c, c, \frac{2\alpha}{2}) = 1$, and hence the Dirichlet composition is well defined in this case, and we we have that $f^2 \sim [c^2, \alpha', \beta']$ for $\alpha',\beta'\in \Z$. This implies $f^2$ properly represents $c^2$, since $C(-4D) \cong (\Z_2)^n$ for some $n$, so each element of $C(-4D)$ has order at most $2$. Therefore, the class $[f]$ in $C(-4D)$ has order at most $2$, so $[f]^2$ is the identity and hence $f^2\sim [1,0,D]$. Thus, by the above, $[1,0,D] \sim [c^2, \alpha', \beta']$. From this, using Lemma 2.3 of \cite{Cox} once again, we infer that $x^2 + Dy^2$ properly represents $c^2$. Therefore, there exists a normalized solution $(a,b,c)$ to \eqref{eq:pelleqrepeated} for some $a,b \in \Z$. This completes the first implication. 

Conversely, let $(a,b,c)$ be a normalized solution to \eqref{eq:pelleqrepeated}. First, suppose for contradiction that $(c,D) = h_0 > 1$. Then we have $h_0 | a^2$. Suppose $h_0 = q_1^{m_1} \cdots q_r^{m_r}$ for primes $q_i$. Define $h_1 := q_1\cdots q_r$. This gives us $h_1 > 1$ and $h_1 \mid c$, $h_1 \mid h_0$, and $h_1 \mid a$. Therefore, $h_1^2 | a^2$ and $h_1^2 | c^2$. This implies that $h_1^2 \mid Db^2$ but $h_1^2$ does not divide $D$, because $D$ is square free. Thus, $h_1$ divides $b$, and therefore $(a, b, c) > 1$, which is a contradiction. Hence $(c,D) = 1$.

Let $p$ be a prime factor of $c$. We claim that $(b,p)=1$. Suppose not. Then since $p\mid b$, and  $p\mid c$ we get $p \mid a$. This
contradicts the assumption that $(a,b,c) = 1$, and thus $(b, p) = 1$ as claimed. Thus $a^2+Db^2=c^2 $ implies $a^2+Db^2\equiv 0\ (\text{mod }p)$ which
gives us $-D\equiv(ab^{-1})^2\ (\text{mod }p)$ where $b^{-1}$ is the inverse of $b$ modulo $p$ which exists as $(b,p)=1$.
So, $\left(\frac{-D}{p}\right)=1$ for any prime factor $p$ of $c$.
\end{proof}

%%%%%%%%%%%%%%%%%%%%%%%%%%%%%%%%%%%%%%%%%%%%%%%%%%%%%%%%%%%%%%%%%%
%%%%%%%%%%%%%%%%%%%%%%%%%%%%%%%%%%%%%%%%%%%%%%%%%%%%%%%%%%%%%%%%%%
\subsection{Factorization of $G_D(\mathbb{Q})$:} 

In order to state and prove the theorems which provide a factorization of $G_D(\Q)$, we first need the following two lemmas.

\begin{lemma}\label{lem:division}
For some $p$ such that $\leg{-D}{p_i}  = 1$, let $x_0^2 + Dy_0^2 = p^{2\alpha}$ where $(x_0, y_0) = 1$. Also assume $p^{2\alpha} | c^2 + Dd^2$ for some $c,d$ such that $(c,d) = 1$. Then \textit{exactly one} of the following is true:
\be x_0+y_0\sqrt{-D}\mid c+d\sqrt{-D}\ee  or \be x_0-y_0\sqrt{-D}\mid c+d\sqrt{-D}\ee  in $\Z[\sqrt{-D}]$.
\end{lemma}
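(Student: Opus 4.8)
The plan is to convert element-level divisibility in $\Z[\sqrt{-D}]$ into divisibility of ideals, where the possible failure of unique factorization causes no trouble. The structural fact I would establish first is that, since $D$ is squarefree and $-D \equiv 2,3 \pmod 4$, the ring $\Z[\sqrt{-D}]$ is exactly the ring of integers $\mathcal{O}_K$ of $K = \Q(\sqrt{-D})$, hence a Dedekind domain (of discriminant $-4D$). In a Dedekind domain nonzero ideals factor uniquely into primes and ``to contain is to divide,'' so for principal ideals $\alpha \mid \beta$ is equivalent to $(\beta)\subseteq(\alpha)$, which is equivalent to $(\alpha)\mid(\beta)$. I would treat the genuine content as the case $\alpha \geq 1$ (when $\alpha=0$ the element $x_0+y_0\sqrt{-D}$ is a unit and equals its conjugate).

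Next I would analyze the prime $p$. Because $\leg{-D}{p}=1$, the prime $p$ is odd and $p\nmid D$, so $p$ is unramified and splits: $(p)=\mathfrak{p}\,\overline{\mathfrak{p}}$ with $\mathfrak{p}\neq\overline{\mathfrak{p}}$ distinct primes of norm $p$. Writing $\pi := x_0+y_0\sqrt{-D}$ and $\overline{\pi} := x_0-y_0\sqrt{-D}$, the hypothesis $N(\pi)=x_0^2+Dy_0^2=p^{2\alpha}$ forces $(\pi)=\mathfrak{p}^a\,\overline{\mathfrak{p}}^b$ with $a+b=2\alpha$, since $\mathfrak{p},\overline{\mathfrak{p}}$ are the only primes lying over powers of $p$. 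Here primitivity enters: if $a\geq 1$ and $b\geq 1$ simultaneously, then $(p)=\mathfrak{p}\,\overline{\mathfrak{p}}\mid(\pi)$, forcing $p\mid x_0$ and $p\mid y_0$, contradicting $(x_0,y_0)=1$. Hence $(\pi)=\mathfrak{p}^{2\alpha}$ or $(\pi)=\overline{\mathfrak{p}}^{2\alpha}$; after relabeling I may assume $(\pi)=\mathfrak{p}^{2\alpha}$, and conjugating gives $(\overline{\pi})=\overline{\mathfrak{p}}^{2\alpha}$.

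The same primitivity idea applied to $\gamma := c+d\sqrt{-D}$ finishes the argument. Let $a'=v_{\mathfrak{p}}((\gamma))$ and $b'=v_{\overline{\mathfrak{p}}}((\gamma))$; since only these two primes contribute powers of $p$ to the norm, $v_p(N(\gamma))=a'+b'$, and $p^{2\alpha}\mid N(\gamma)=c^2+Dd^2$ gives $a'+b'\geq 2\alpha$. As $(c,d)=1$, the identical contradiction rules out $a'\geq 1$ and $b'\geq 1$ together, so one of $a',b'$ vanishes. If $b'=0$ then $a'\geq 2\alpha$, so $(\pi)=\mathfrak{p}^{2\alpha}\mid(\gamma)$ and $\pi\mid\gamma$; symmetrically $a'=0$ yields $\overline{\pi}\mid\gamma$. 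This gives at least one divisibility. For \emph{exactly} one, observe that if both held we would have $a'\geq 2\alpha\geq 1$ and $b'\geq 2\alpha\geq 1$, contradicting that one exponent is zero (equivalently, both dividing would force $(p^{2\alpha})=\mathfrak{p}^{2\alpha}\overline{\mathfrak{p}}^{2\alpha}\mid(\gamma)$, hence $p^{2\alpha}\mid\gcd(c,d)$).

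The main obstacle is conceptual rather than computational: one must justify replacing the possibly ill-behaved element divisibility in $\Z[\sqrt{-D}]$ by ideal divisibility, which rests on recognizing that under the hypotheses $-D\equiv 2,3\pmod 4$ and $D$ squarefree the ring is genuinely the Dedekind ring of integers, so the cautionary remarks in the introduction about non-Dedekind behavior do not apply here. If instead one insisted on a fully elementary argument, the work would shift to proving $p^{2\alpha}\mid cx_0+Ddy_0$ and $p^{2\alpha}\mid dx_0-cy_0$ (and their conjugate counterparts) directly; the relevant identities are $y_0(cx_0+Ddy_0)+x_0(dx_0-cy_0)=d\,p^{2\alpha}$, $x_0(cx_0+Ddy_0)-Dy_0(dx_0-cy_0)=c\,p^{2\alpha}$, and $(cx_0+Ddy_0)^2+D(dx_0-cy_0)^2=(c^2+Dd^2)p^{2\alpha}$, and the delicate step would be the bookkeeping of $p$-adic valuations via $x_0\equiv\pm r\,y_0\pmod p$ with $r^2\equiv -D\pmod p$.
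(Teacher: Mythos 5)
Your proof is correct, but it takes a genuinely different route from the paper's. The paper argues entirely elementarily: from $p^{2\alpha}\mid x_0^2+Dy_0^2$ and $p^{2\alpha}\mid c^2+Dd^2$ it deduces $p^{2\alpha}\mid (dx_0+cy_0)(dx_0-cy_0)$, shows (using $p$ odd, $p\nmid D$, and $(c,d)=(x_0,y_0)=1$) that $p$ cannot divide both factors, so $p^{2\alpha}$ divides exactly one of them; the Brahmagupta--Fibonacci identity $(x_0^2+Dy_0^2)(c^2+Dd^2)=(cx_0+Ddy_0)^2+D(dx_0-cy_0)^2$ then forces $p^{2\alpha}\mid cx_0+Ddy_0$ (resp.\ $cx_0-Ddy_0$), and the divisor is exhibited explicitly, e.g.\ $\bigl(\tfrac{cx_0+Ddy_0}{p^{2\alpha}}+\tfrac{dx_0-cy_0}{p^{2\alpha}}\sqrt{-D}\bigr)(x_0+y_0\sqrt{-D})=c+d\sqrt{-D}$. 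This is exactly the ``fully elementary argument'' you sketch in your closing paragraph, and it is less delicate than you fear: no valuation bookkeeping with $x_0\equiv \pm r y_0 \pmod p$ is needed beyond the single ``$p$ divides at most one factor'' step. Your argument instead exploits that, under the standing hypothesis $-D\equiv 2,3 \pmod 4$ with $D$ squarefree, $\Z[\sqrt{-D}]$ is the full ring of integers and hence Dedekind, so $\leg{-D}{p}=1$ makes $p$ split as $\mathfrak{p}\overline{\mathfrak{p}}$; norms pin down $(\pi)=\mathfrak{p}^a\overline{\mathfrak{p}}^b$, primitivity kills mixed factorizations, and ``to contain is to divide'' converts back to element divisibility. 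All of these steps are sound, including your handling of ``exactly one'' and of the degenerate case $\alpha=0$, and you are right that the introduction's caution about non-Dedekind rings does not apply under the standing hypotheses. What each approach buys: the paper's computation is self-contained, uses no algebraic number theory, and is indifferent to whether the ring is Dedekind (it would survive verbatim for $-D\equiv 1\pmod 4$); your proof is shorter and more conceptual---the dichotomy in the lemma is revealed as the choice of which prime above $p$ divides $c+d\sqrt{-D}$---and generalizes readily, but it genuinely uses maximality of the order (for $-D\equiv 1\pmod 4$ one would have to work with ideals prime to the conductor), which is precisely the limitation you flag.
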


\begin{proof} First, we have $p^{2\alpha} \mid x_0^2 + Dy_0^2$, $p^{2\alpha} \mid c^2 + Dd^2$.  Therefore, $p^{2\alpha} \mid x_0^2 d^2 - c^2 y_0^2$ which implies $ p^{2\alpha} \mid (dx_0 + cy_0)(dx_0 - cy_0)$.

We claim that $p^{2\alpha}$ divides \textit{exactly one} of $dx_0+cy_0$ and $dx_0-cy_0$. If not, and $p$ divides both $(dx_0 + cy_0)$ and $(dx_0 - cy_0)$, then $p$ divides $2dx_0$. Since $p$ is odd, $p \mid dx_0$. If $p \mid d$ then $p \mid c$, which is a contradiction, while  if $p \mid x_0$, then $p \mid Dy_0^2$. However, if $p \mid y_0$, then $(x_0, y_0) > 1$. Therefore, $p \mid D$, contradicting Lemma \ref{lem:solexistence}.

Therefore, $p^{2\alpha}$ divides \textit{exactly one} of $(dx_0 + cy_0)$ and $(dx_0 - cy_0)$. Now let us look at these two cases separately.

\vspace{0.15cm}

\textbf{Case 1: $p^{2\alpha} \mid (dx_0 - cy_0)$.} In this case we have
\be p^{4\alpha}\mid (x_0^2+Dy_0^2)(c^2+Dd^2) \ =\  (cx_0+Ddy_0)^2+D(dx_0-cy_0)^2,\ee 
and since $p^{4\alpha} \mid (dx_0 - cy_0)^2$ by assumption, we get 
$ p^{4\alpha}\mid (cx_0+Ddy_0)^2$. Therefore, $p^{2\alpha}\mid (cx_0+Ddy_0)$. 

Consider the number $\alpha+\beta\sqrt{-D}$ where $\alpha = \frac{cx_0+Ddy_0}{p^{2\alpha}}$ and $\beta = \frac{dx_0-cy_0}{p^{2\alpha}}$. Note that $$(\alpha+\beta\sqrt{-D})(x_0+y_0\sqrt{-D}) \ = \ (c+d\sqrt{-D}).$$ Therefore, $(x_0+y_0\sqrt{-D})\mid  (c+d\sqrt{-D})$ in $\Z[\sqrt{-D}]$. 

\vspace{0.2cm}

\textbf{Case 2: $p^{2\alpha} | (dx_0 + cy_0)$.} We proceed in a similar fashion and show that
\be (\gamma+\delta\sqrt{-D})(x_0-y_0\sqrt{-D})\ = \ (c+d\sqrt{-D}),\ee 
where $\gamma = \frac{cx_0-Ddy_0}{p^{2\alpha}}$ and $\delta = \frac{dx_0+cy_0}{p^{2\alpha}}$.
\end{proof}

\vspace{0.5cm}

\begin{lemma}\label{lem:definezeta}
    Let $p$ be an odd prime. Then \ref{eq:pelleq} has unique normalized solution of the form $(a,b,p)$.
\end{lemma}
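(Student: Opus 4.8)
The plan is to read the statement with its implicit standing hypothesis, namely that $p$ is an odd prime with $\leg{-D}{p}=1$; otherwise no normalized solution $(a,b,p)$ exists at all, since the converse half of Lemma~\ref{lem:solexistence} shows any such solution forces $(b,p)=1$ and hence $-D\equiv(ab^{-1})^2\pmod p$. Under this hypothesis \emph{existence} is immediate from Lemma~\ref{lem:solexistence} applied to $c=p$ (the case $k=1$, $n_1=1$), so the real content is \emph{uniqueness}, and that is where I would put the work. First I would record two normalizing facts about any solution $(a,b,p)$. Because $a^2+Db^2=p^2$, writing $g=\gcd(a,b)$ gives $g^2\mid p^2$, so $g\in\{1,p\}$; and $g=p$ would put $p$ into $\gcd(a,b,p)$, contradicting normalization, whence $\gcd(a,b)=1$. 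Moreover $a,b\ge 1$ strictly: $b=0$ gives $a=p$ with $\gcd=p$, while $a=0$ gives $Db^2=p^2$, impossible for squarefree $D>1$. Coprimality of the pair and strict positivity of both coordinates are exactly what will let me invoke Lemma~\ref{lem:division} and then pin down signs.

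Next, suppose $(a_1,b_1,p)$ and $(a_2,b_2,p)$ are both normalized solutions. I would apply Lemma~\ref{lem:division} with $\alpha=1$, taking $x_0=a_1$, $y_0=b_1$ (so $x_0^2+Dy_0^2=p^2$ and $(x_0,y_0)=1$) and $c=a_2$, $d=b_2$ (so $p^2\mid a_2^2+Db_2^2=p^2$ and $(c,d)=1$). The lemma yields that \emph{exactly one} of $a_1+b_1\sqrt{-D}\mid a_2+b_2\sqrt{-D}$ or $a_1-b_1\sqrt{-D}\mid a_2+b_2\sqrt{-D}$ holds in $\Z[\sqrt{-D}]$. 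In either case the quotient $w$ satisfies $N(a_2+b_2\sqrt{-D})=N(a_1\pm b_1\sqrt{-D})\,N(w)$, i.e.\ $p^2=p^2\,N(w)$, so $N(w)=1$; since $D>1$, the only elements of $\Z[\sqrt{-D}]$ of norm one are the units $\pm 1$ (as established when computing $U$). Hence either $a_2+b_2\sqrt{-D}=\pm(a_1+b_1\sqrt{-D})$ or $a_2+b_2\sqrt{-D}=\pm(a_1-b_1\sqrt{-D})$.

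Finally I would clear the signs using $a_i,b_i\ge 1$. Comparing coefficients of $\sqrt{-D}$, the alternative $\pm(a_1-b_1\sqrt{-D})$ forces $b_2=\mp b_1$, which contradicts $b_1,b_2\ge 1$ for either sign and so eliminates the second divisibility entirely; and in $a_2+b_2\sqrt{-D}=\pm(a_1+b_1\sqrt{-D})$ the minus sign gives $a_2=-a_1<0$, impossible, so the plus sign holds and $(a_2,b_2)=(a_1,b_1)$. This proves uniqueness, and the resulting solution is precisely what defines $\zeta_p:=a+b\sqrt{-D}$.

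The main obstacle is conceptual rather than computational: $\Z[\sqrt{-D}]$ is in general neither a UFD nor a Dedekind domain, so one cannot simply factor the norm $p^2$ into prime ideals and count. Lemma~\ref{lem:division} is the tool that sidesteps this by delivering a concrete divisibility between the two candidate solutions; once it is in hand the argument reduces to multiplicativity of the norm (forcing a unit quotient) and sign bookkeeping, both routine. The one point demanding genuine care is confirming that the positivity constraints built into the definition of a normalized solution kill the conjugate and negative alternatives, which is exactly what upgrades ``unique up to the symmetries $a+b\sqrt{-D}\mapsto\pm(a\mp b\sqrt{-D})$'' to honest uniqueness.
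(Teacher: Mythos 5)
Your proof is correct, and it is cleaner than the paper's. Both arguments rest on the same two pillars---Lemma \ref{lem:division} and the fact that the units of $\Z[\sqrt{-D}]$ are $\{\pm 1\}$ when $D>1$---but you deploy them differently. The paper applies Lemma \ref{lem:division} twice, once in each direction (each solution in turn playing the role of $(x_0,y_0)$), producing four cases of mutual divisibility; it then argues that two elements of $\Z[\sqrt{-D}]$ dividing each other differ by a unit, and disposes of the two mixed cases by a rather awkward degenerate analysis (concluding ``$x_0=\pm p$''). You instead apply Lemma \ref{lem:division} only once and replace the mutual-divisibility step with multiplicativity of the norm: since $a_1+b_1\sqrt{-D}$ and $a_2+b_2\sqrt{-D}$ both have norm $p^2$, the quotient has norm $1$ and is therefore $\pm 1$. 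This collapses the four cases to two and eliminates the degenerate analysis entirely. You also make explicit three points the paper leaves tacit: that the statement must carry the standing hypothesis $\leg{-D}{p}=1$ (existence then following from Lemma \ref{lem:solexistence}), that a normalized solution $(a,b,p)$ automatically satisfies $(a,b)=1$ and $a,b\ge 1$ (the former is needed even to invoke Lemma \ref{lem:division}), and the final sign bookkeeping that upgrades ``unique up to units and conjugation'' to honest uniqueness of the positive triple---a step the paper skips, ending instead with the garbled conclusion ``$a=\pm b$ and $c=\pm d$.'' Your version is the one I would keep.
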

\begin{proof}
    Let $a,b,c,d$ be integers. We first want to show, if $a + b\sqrt{-D} \mid c + d\sqrt{-D}$ and $c + d\sqrt{-D} \mid a + b\sqrt{-D}$ in 
    $\Z[\sqrt{-D}]$, then $c = \pm a$ and $b = \pm d$. 
    
    We can take $(\frac{a + b\sqrt{-D}}{c + d\sqrt{-D}}) = x + y \sqrt{-D}$ and $(\frac{c + d\sqrt{-D}}{a + b\sqrt{-D}}) = w + z\sqrt{-D}$. then we get $(x + y\sqrt{-D})(w + z\sqrt{-D}) = 1$. This implies $(x + y\sqrt{-D}), (w + z\sqrt{-D})$ are units, but the only units in $\mathbb{Z}[\sqrt{-D}]$ are $\pm 1$. Therefore, we get $z = y = 0$ and $x = \pm 1$, $w = \pm 1$. Therefore, $a = \pm b$ and $c = \pm d$.
    
    Now suppose that $p^2 = x_0^2 + Dy_0^2 = x_1^2 + Dy_1^2$ such that $(x_0, y_0) = (x_1, y_1) = 1$. This implies, by Lemma $\ref{lem:solexistence}$ that $\leg{-D}{p} = 1$. Then by Lemma $\ref{lem:division}$ we have one of the following four cases:
    \begin{enumerate}
        \item $x_0 + y_0 \sqrt{-D} \mid x_1 + y_1 \sqrt{-D}$ and $x_1 + y_1 \sqrt{-D} \mid x_0 + y_0 \sqrt{-D}$,
        \item $x_0 + y_0 \sqrt{-D} \mid x_1 + y_1 \sqrt{-D}$ and $x_1 - y_1 \sqrt{-D} \mid x_0 + y_0 \sqrt{-D}$, 
        \item $x_0 - y_0 \sqrt{-D} \mid x_1 + y_1 \sqrt{-D}$ and $x_1 + y_1 \sqrt{-D} \mid x_0 + y_0 \sqrt{-D}$, or
        \item $x_0 - y_0 \sqrt{-D} \mid x_1 + y_1 \sqrt{-D}$ and $x_1 - y_1 \sqrt{-D} \mid x_0 + y_0 \sqrt{-D}$.
    \end{enumerate}
    
    The proof of the lemma for Case (1) follows directly from the above argument. For Case (2), we have $x_0 - y_0 \sqrt{-D} \mid x_1 - y_1 \sqrt{-D} \mid x_0 + y_0 \sqrt{-D}$. Therefore, $x_0 + y_0 \sqrt{-D}| x_0 - y_0 \sqrt{-D}$ and $x_0 - y_0 \sqrt{-D}| x_0 + y_0 \sqrt{-D}$, so we get that $x_0 = -x_0$ or $y_0 = - y_0$, which implies $x_0 = \pm p$, since $x_0$ cannot be $0$. Case (3) is the same argument as Case (2). For Case (4) we have $x_0 - y_0 \sqrt{-D} \mid x_1 + y_1 \sqrt{-D}$ and $x_1 + y_1 \sqrt{-D} \mid x_0 - y_0 \sqrt{-D}$, so this case is equivalent to Case (1). 
\end{proof} 

Now, let us define the set
\[
    S\ := \ \left\{\text{odd prime } q : \leg{-D}{q} = 1\right\}.
\]
For each $q\in S$, we also define
\be \zeta_q \ := \  \frac{x_0+y_0\sqrt{-D}}{q},\ee 
where $q^2=x_0^2+Dy_0^2$ and $x_0, y_0 > 0$.

Note that $x_0, y_0$ exist due to Lemma $\ref{lem:solexistence}$ by the definition of the set $S$ and they are unique by Lemma $\ref{lem:definezeta}$. These $\zeta_q$'s are the so-called \textit{elementary solutions}. Our objective is to determine a one to one correspondence between the products of powers of $\zeta_q$'s and the set of normalized solutions of the form $(a,b,c)$ for a given $c$.

%Lemma 4.3 and Theorem 4.4
%%%%%%%%%%%%%%%%%%%%%%%%%%%%%%%%%%%%%%%%%%%%%%%%%%%%%%%%%%%%%%%%%%%%%%%%%%%%%%%%%%%%%%%%%%%%%%%%%%%%%%%%%%%%%%%%%%%%%%%%%%%%%%%%%%%%%%%%%%%%%%%%%%%%%%%%

Now we are ready to state and prove the first theorem.

\begin{theorem}
\label{thm:factorgdq}
Let $z = \frac{a + b\sqrt{-D}}{c} \in G_D(\mathbb{Q})$ where $(a,b) = 1$, $c > 1$. Let $c = p_1^{\alpha_1} \cdots p_k^{\alpha_k}$. Then \be z \ = \  \pm \zeta_{p_1}^{\pm \alpha_1}\cdots \zeta_{p_k}^{\pm \alpha_k}.\ee 
\end{theorem}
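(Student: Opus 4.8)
The plan is to argue by induction on the number of (distinct) prime factors of $c$, peeling off one full prime power at a time. Since $z\in G_D(\Q)$ with $(a,b)=1$ we have $a^2+Db^2=c^2$, so $\gcd(a,b,c)=1$ and $(|a|,|b|,c)$ is a normalized solution; in particular the converse direction of Lemma \ref{lem:solexistence} shows every prime $p\mid c$ satisfies $\leg{-D}{p}=1$, so $p\in S$ and the elementary solution $\zeta_p=\frac{x_0+y_0\sqrt{-D}}{p}$ is defined, with $x_0^2+Dy_0^2=p^2$ and $(x_0,y_0)=1$. The base case $c=1$ forces $b=0$ and $a=\pm 1$ (as $D>1$), giving $z=\pm 1$, the empty product with the overall unit.

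For the inductive step I would fix a prime $p$ with $p^{\alpha}\,\|\,c$. Since $p\mid c$ we have $p^2\mid c^2=a^2+Db^2$, so Lemma \ref{lem:division} applies and tells us that \emph{exactly one} of $x_0+y_0\sqrt{-D}$ or $x_0-y_0\sqrt{-D}$ divides $a+b\sqrt{-D}$ in $\Z[\sqrt{-D}]$; write the divisor as $x_0+\epsilon y_0\sqrt{-D}$ with $\epsilon\in\{+1,-1\}$. Factoring $a+b\sqrt{-D}=(x_0+\epsilon y_0\sqrt{-D})(a_1+b_1\sqrt{-D})$ and taking norms gives $a_1^2+Db_1^2=(c/p)^2$, while $(a_1,b_1)=1$ because any common prime divisor of $a_1,b_1$ would divide $a+b\sqrt{-D}$ and hence both $a$ and $b$. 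Since $N(\zeta_p)=1$ we have $\zeta_p^{-1}=\frac{x_0-y_0\sqrt{-D}}{p}$, so in either case $\frac{x_0+\epsilon y_0\sqrt{-D}}{p}=\zeta_p^{\epsilon}$ and therefore $z=\zeta_p^{\epsilon}\cdot\frac{a_1+b_1\sqrt{-D}}{c/p}$.

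The crucial point --- and the step I expect to be the main obstacle --- is showing that when $\alpha\ge 2$ the sign $\epsilon$ does not change as we continue to remove factors of $p$, so that $p$ contributes $\zeta_p^{\pm\alpha}$ with a \emph{single} sign rather than a mixture (which is what the statement demands). Iterating the construction, suppose after $j$ steps (all with sign $\epsilon$) we have $a+b\sqrt{-D}=(x_0+\epsilon y_0\sqrt{-D})^{j}(a_j+b_j\sqrt{-D})$ with $(a_j,b_j)=1$, and that the next application of Lemma \ref{lem:division} returned the opposite sign, so $x_0-\epsilon y_0\sqrt{-D}\mid a_j+b_j\sqrt{-D}$. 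Combining this with the previous step $a_{j-1}+b_{j-1}\sqrt{-D}=(x_0+\epsilon y_0\sqrt{-D})(a_j+b_j\sqrt{-D})$ yields $a_{j-1}+b_{j-1}\sqrt{-D}=(x_0^2+Dy_0^2)(a_{j+1}+b_{j+1}\sqrt{-D})=p^2(a_{j+1}+b_{j+1}\sqrt{-D})$, forcing $p\mid a_{j-1}$ and $p\mid b_{j-1}$ and contradicting $(a_{j-1},b_{j-1})=1$. Hence the sign is forced to equal $\epsilon$ at every step, and after $\alpha$ steps we obtain $z=\zeta_p^{\epsilon\alpha}\cdot z'$ with $z'=\frac{a_\alpha+b_\alpha\sqrt{-D}}{c/p^{\alpha}}\in G_D(\Q)$, $(a_\alpha,b_\alpha)=1$, and denominator $c/p^{\alpha}$ having one fewer distinct prime factor.

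Applying the inductive hypothesis to $z'$ expresses it as $\pm\prod_{i\ge 2}\zeta_{p_i}^{\pm\alpha_i}$, and multiplying by $\zeta_{p_1}^{\epsilon\alpha_1}$ (taking $p=p_1$) yields the claimed $z=\pm\zeta_{p_1}^{\pm\alpha_1}\cdots\zeta_{p_k}^{\pm\alpha_k}$. I would close by remarking that the hypotheses $-D\equiv 2,3\ (\mathrm{mod}\ 4)$ and that $C(-4D)$ is a free $\Z_2$-module enter only through Lemmas \ref{lem:solexistence} and \ref{lem:division} (guaranteeing the $\zeta_p$ exist and the clean divisibility dichotomy holds); once those are in hand, the theorem is essentially bookkeeping, with the one genuine subtlety being the sign-consistency argument above, where the coprimality $(a,b)=1$ is used decisively.
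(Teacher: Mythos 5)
Your proof is correct, and it rests on the same two pillars as the paper's (Lemma \ref{lem:solexistence} to make the $\zeta_{p_i}$ well defined, and Lemma \ref{lem:division} to do the splitting), but the handling of prime powers is genuinely different. The paper never peels off one elementary factor at a time: for each prime $p_i$ it forms the \emph{full power} $x_i + y_i\sqrt{-D} := (\gamma_i + \beta_i\sqrt{-D})^{\alpha_i}$, where $p_i^2 = \gamma_i^2 + D\beta_i^2$ is the elementary solution, notes $p_i^{2\alpha_i} = x_i^2 + Dy_i^2$ divides $a^2 + Db^2$, and applies Lemma \ref{lem:division} once per prime with exponent $2\alpha_i$; since $\zeta_{p_i}^{\pm\alpha_i} = (x_i \pm y_i\sqrt{-D})/p_i^{\alpha_i}$, the single-sign exponent $\pm\alpha_i$ comes for free and the sign-consistency issue you identify as the main obstacle never arises. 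You instead invoke Lemma \ref{lem:division} only in its $\alpha = 1$ case, removing one factor $x_0 + \epsilon y_0\sqrt{-D}$ at a time, and therefore must prove the sign cannot flip partway through a prime power; your argument for this (a flip at step $j$ would force $a_{j-1} + b_{j-1}\sqrt{-D} = p^2(a_{j+1}+b_{j+1}\sqrt{-D})$, contradicting $(a_{j-1},b_{j-1})=1$) is correct, as is your check that primitivity $(a_j,b_j)=1$ propagates down the chain. The trade-off is worth noting: your route costs an extra argument, but it only ever uses Lemma \ref{lem:division} for elementary solutions, whose coprimality $(x_0,y_0)=1$ is guaranteed by Lemmas \ref{lem:solexistence} and \ref{lem:definezeta}; the paper's route is shorter but silently relies on the hypothesis $(x_i,y_i)=1$ of Lemma \ref{lem:division} applied to the power $(\gamma_i+\beta_i\sqrt{-D})^{\alpha_i}$, a primitivity claim it never verifies (and which Lemma \ref{lem:multsols} does not supply, since that lemma requires coprime denominators). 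In that respect your version is actually the more careful of the two.
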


\begin{proof}
Note that given such a $z$, we have  $a^2+Db^2=c^2$ and thus $(a,b,c)$ is a normalized solution to $x^2 + Dy^2 = z^2$. Thus according to the Lemma \ref{lem:solexistence}, for all the prime factors $q$ of $c$, $\leg{-D}{q} = 1$. Thus  $\zeta_{p_i}$ is well defined.

Consider $z$ as in the statement. Then
\begin{gather*}
    a^2+Db^2\ = \ c^2\ = \ p_1^{2\alpha_1}p_2^{2\alpha_2}\cdots p_k^{2\alpha_k}.
\end{gather*}
Note that each $p_i^2 = \gamma_i^2+D\beta_i^2$ for some $\gamma_i,\beta_i\in \Z$ such that $(\gamma_i, \beta_i) = 1$.
Therefore,
\be p_i^{2\alpha_i}\ = \ (\gamma_i^2+D\beta_i^2)^{\alpha_i} \ = \ \left(\gamma_i+\beta_i\sqrt{-D}\right)^{\alpha_i}\left(\gamma_i-\beta_i\sqrt{-D}\right)^{\alpha_i} \ := \  (x_i+y_i\sqrt{-D})(x_i-y_i\sqrt{-D})\ee 
for each $i$. Here we used the fact that product of numbers of the form $x^2+Dy^2$ is again of the form $x^2+Dy^2$
Thus for each $i$, \be p_i^{2\alpha_i}\ = \ x_i^2+Dy_i^2\ee  and $p_i^{2\alpha_i}\mid a^2+Db^2$. Hence by Lemma \ref{lem:division}, we have exactly one of the following:
\be  x_i+y_i\sqrt{-D}\mid a+b\sqrt{-D} \ \Leftrightarrow\ x_i-y_i\sqrt{-D}\mid a-b\sqrt{-D}\ee 
or
\be  x_i-y_i\sqrt{-D}\mid a+b\sqrt{-D} \ \Leftrightarrow\ x_i+y_i\sqrt{-D}\mid a-b\sqrt{-D}.\ee 

As $$a^2+Db^2 \ = \  c^2\ = \ p_1^{2\alpha_1}p_2^{2\alpha_2}\cdots p_k^{2\alpha_k},$$ we have
\begin{dmath}
    (a+b\sqrt{-D})(a-b\sqrt{-D}) \ = \  (x_1+y_1\sqrt{-D})(x_2+y_2\sqrt{-D})\cdots (x_k+y_k\sqrt{-D})\cdot  (x_1-y_1\sqrt{-D})(x_2-y_2\sqrt{-D})\cdots (x_k-y_k\sqrt{-D}),
\end{dmath}
which implies that
\begin{dmath}
    (a_1+b_1\sqrt{-D})(a_1-b_1\sqrt{-D}) \ = \ (x_2+y_2\sqrt{-D})\cdots (x_k+y_k\sqrt{-D})\cdot (x_2-y_2\sqrt{-D})\cdots (x_k-y_k\sqrt{-D})
\end{dmath}
where
\begin{equation}
a_1+b_1\sqrt{-D} \ := \
\begin{cases}
\frac{a+b\sqrt{-D}}{x_1+y_1\sqrt{-D}}, & \text{if } x_1+y_1\sqrt{-D}\mid a+b\sqrt{-D} \\ \\
\frac{a+b\sqrt{-D}}{x_1-y_1\sqrt{-D}}, & \text{if } x_1-y_1\sqrt{-D}\mid a+b\sqrt{-D}. \\
\end{cases}
\end{equation}

Also note that $a_1^2+Db_1^2 = p_2^{2\alpha_2}\cdots p_k^{2\alpha_k}$. If we continue this process inductively and keep defining subsequent terms $a_i+b_i\sqrt{-D}$ as above, at the final step we get the following \be (a_k+b_k\sqrt{-D})(a_k-b_k\sqrt{-D})\ = \ 1\ee  where $a_k,b_k\in \Z$. But that would mean $a_k=\pm 1, b_k=0$. Thus,
\be (a+b\sqrt{-D})\ = \  \pm 1\cdot (x_1\pm y_1\sqrt{-D})(x_2\pm y_2\sqrt{-D})\cdots (x_k\pm y_k\sqrt{-D}),\ee  and dividing by $c$, we obtain
\be \frac{a+b\sqrt{-D}}{c}\ = \ \pm 1\cdot
\frac{x_1\pm y_1\sqrt{-D}}{p_1^{\alpha_1}}\cdot\frac{x_2\pm y_2\sqrt{-D}}{p_2^{\alpha_2}} \cdots  \frac{x_k\pm y_k\sqrt{-D}}{p_k^{\alpha_k}}.\ee 
Now note that
\be \zeta_{p_i}^{\pm \alpha_i}\ = \ \left(\frac{\gamma_i+ \beta_i\sqrt{-D}}{p_i}\right)^{\pm\alpha_i}\ = \ \left(\frac{x_i\pm y_i\sqrt{-D}}{p_i^{\alpha_i}}\right).\ee 
\end{proof}

%End of Theorem 4.4 and Start Theorem 4.5
%%%%%%%%%%%%%%%%%%%%%%%%%%%%%%%%%%%%%%%%%%%%%%%%%%%%%%%%%%%%%%%%%%%%%%%%%%%%%%%%%%%%%%%%%%%%%%%%%%%%%%%%%%%%%%%%%%%%%%%%%%%%%%%%%%%%%%%%%%%%%%%%%%%%%%%%%%%%%%%%%%%%%%%%%%%%%%%%%%%%%%%%%%%%%%%%%%%%%%%%%%%%%%%%%%%%%

\subsection{Obtaining Solutions from Factorization}

We explore consequences of being able to factor every element of $G_D(\Q)$. Given a factorization of some $z\in G_D(\Q)$, we determine $c$ where $(a,b,c)$ is the normalized solution to \eqref{eq:pelleqrepeated} corresponding to $z$.

We first prove a needed result.

\begin{lemma}
\label{lem:multsols}
Let $z_1 = \frac{a_1 + b_1 \sqrt{-D}}{c_1} \in G_D(\Q)$ and $z_2 = \frac{a_2 + b_2\sqrt{-D}}{c_2} \in G_D(\Q)$ such that $(a_1, b_1) = (a_2, b_2) = (c_1, c_2) = 1$. Then \be (a_1a_2-Db_1b_2,a_1b_2+a_2b_1, c_1c_2)\ee  is a normalized solution, and $a_1a_2-Db_1b_2$ and $a_1b_2+a_2b_1$ are co-prime.
\end{lemma}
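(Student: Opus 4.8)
The plan is to verify directly that the proposed triple satisfies the Pell equation, and then to prove the sharper coprimality statement, from which normalization follows for free. Writing $A := a_1a_2 - Db_1b_2$ and $B := a_1b_2 + a_2b_1$, the multiplication rule \eqref{Elliptic Multiplication} (equivalently, expanding the product in $\Z[\sqrt{-D}]$) gives $A + B\sqrt{-D} = (a_1 + b_1\sqrt{-D})(a_2 + b_2\sqrt{-D})$. By multiplicativity of the norm, $A^2 + DB^2 = (a_1^2 + Db_1^2)(a_2^2 + Db_2^2) = c_1^2c_2^2 = (c_1c_2)^2$, so $(A, B, c_1c_2)$ solves $x^2 + Dy^2 = z^2$. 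Since $\gcd(A,B) = 1$ forces $\gcd(A, B, c_1c_2) = 1$, it suffices to prove $A$ and $B$ are coprime; this yields both conclusions of the lemma at once.

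For coprimality I would argue by contradiction: suppose a prime $p$ divides both $A$ and $B$. Then $p \mid A^2 + DB^2 = (c_1c_2)^2$, hence $p \mid c_1c_2$, and since $(c_1,c_2)=1$ the prime $p$ divides exactly one of $c_1, c_2$; say $p \mid c_1$ and $p \nmid c_2$ (the other case is symmetric). Because $(a_1, b_1, c_1)$ is a normalized solution, Lemma~\ref{lem:solexistence} gives $\leg{-D}{p} = 1$, so in particular $p \nmid D$. From $a_1^2 + Db_1^2 = c_1^2 \equiv 0 \pmod p$ together with $p \nmid D$ and $(a_1,b_1)=1$, one checks that $p$ divides neither $a_1$ nor $b_1$: if $p \mid a_1$ then $p \mid Db_1^2$ forces $p \mid b_1$, and symmetrically, contradicting $(a_1,b_1)=1$.

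The heart of the argument is then a short elimination modulo $p$. The congruences $A \equiv 0$ and $B \equiv 0 \pmod p$ read $a_1a_2 \equiv Db_1b_2$ and $a_1b_2 \equiv -a_2b_1$. Multiplying the first by $b_2$ and the second by $a_2$ and comparing gives $Db_1b_2^2 \equiv -a_2^2b_1 \pmod p$, that is, $b_1(a_2^2 + Db_2^2) \equiv 0 \pmod p$. But $a_2^2 + Db_2^2 = c_2^2$ and $p \nmid c_2$, so $a_2^2 + Db_2^2 \not\equiv 0 \pmod p$; hence $p \mid b_1$, contradicting the previous paragraph. This contradiction shows no prime divides both $A$ and $B$, so $\gcd(A,B) = 1$.

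I expect this final elimination to be the only real content: the trick is choosing the multipliers that collapse the two congruences into the single factor $b_1(a_2^2 + Db_2^2)$, which is manifestly $b_1 c_2^2$ modulo $p$ and hence nonzero. An alternative would factor $A \pm B\sqrt{-D}$ in $\Z[\sqrt{-D}]$, but since that ring need not be a UFD the modular route above is cleaner and sidesteps ideal theory. The minor edge cases $c_1 = 1$ or $c_2 = 1$ are handled automatically: then $z_i = \pm 1$, no prime divides that $c_i$, and the statement reduces to $\gcd(a_j, b_j) = 1$ for the other index.
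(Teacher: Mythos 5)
Your proof is correct, and it hinges on exactly the same key identity as the paper's: the linear combination $(-b_2)(a_1a_2-Db_1b_2) + a_2(a_1b_2+a_2b_1) = b_1(a_2^2+Db_2^2)$. The difference is organizational, and yours is leaner. The paper runs \emph{two} eliminations --- the one above plus $a_1(a_1a_2-Db_1b_2) + Db_1(a_1b_2+a_2b_1) = a_2(a_1^2+Db_1^2)$ --- rules out $q \mid b_1$ and $q \mid a_2$ by separate divisibility chases (including an embedded square-free argument to exclude $q \mid D$), and only at the end concludes that $q$ divides both $c_2^2$ and $c_1^2$, contradicting $(c_1,c_2)=1$. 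You instead use the norm equation $A^2+DB^2=(c_1c_2)^2$ up front to place the hypothetical common prime $p$ in exactly one of $c_1, c_2$, after which a single elimination finishes the job. That is a genuine streamlining of the case analysis, and your explicit verification that the triple solves the equation (via multiplicativity of the norm) is also cleaner than the paper's implicit appeal to Section 2.

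One caveat worth fixing: your appeal to Lemma \ref{lem:solexistence} (used only to get $p \nmid D$, hence $p \nmid a_1, b_1$) formally imports that lemma's standing hypotheses --- that $C(-4D)$ is a free $\Z_2$-module, that $-D \equiv 2,3 \pmod{4}$, and that $c_1$ is odd so the Legendre symbol is defined. The remark immediately following Lemma \ref{lem:multsols} in the paper stresses that this lemma does \emph{not} depend on the class-group structure, and the paper's proof preserves that independence; your citation forfeits it, even though you only use the ``converse'' direction of Lemma \ref{lem:solexistence}, whose proof happens to need no class-group input. The repair is cheap because you never need $p \nmid D$ at all: once the elimination gives $p \mid b_1(a_2^2+Db_2^2) = b_1 c_2^2$ and hence $p \mid b_1$, combine $p \mid b_1$ and $p \mid c_1$ with $a_1^2 = c_1^2 - Db_1^2$ to get $p \mid a_1$, contradicting $(a_1,b_1)=1$. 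With that substitution your argument is shorter than the paper's and equally hypothesis-free.
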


\begin{proof} It suffices to show that there are no common prime divisor of $a_1a_2-Db_1b_2$ and $a_1b_2+a_2b_1$. Assume for contradiction that there exists some prime $q$ that divides them. We then have $q\mid a_1a_2-Db_1b_2$ and $q\mid a_1b_2+a_2b_1$, and  \begin{center}
$q\mid (-b_2)(a_1a_2-Db_1b_2) + (a_2)(a_1b_2+a_2b_1)$ \end{center} implies $ q\mid b_1(a_2^2+Db_2^2)$. However if $q\mid b_1$ then $q\mid a_1a_2$ and $ q\mid a_1b_2$. Since $(a_1, b_1) = 1$ and $q \mid b_1$, we have $q \nmid a_1$. Therefore, $q\mid a_2 \text{ and } q\mid b_2$. This is a contradiction, and thus $q\mid (a_2^2+Db_2^2)$.

Also note that $q | (a_1)(a_1a_2-Db_1b_2) + (Db_1)(a_1b_2+a_2b_1)= a_2(a_1^2+Db_1^2)$ implies $ q\mid a_2, \text{ or } q\mid a_1^2+Db_1^2$. Now assume that $q\mid a_2$. Then $q\mid Db_1b_2$ and $q\mid a_1b_2$ which implies $ q\mid a_1$, and we have $q\nmid b_2$. Suppose for contradiction that $q\mid D$. Then $q\mid a_2^2+Db_2^2=c_2^2$ and since $q$ is a prime, $q^2\mid c_2^2$ implies $ q^2\mid Db_2^2$ which yields $ q^2\mid D$ as $q\nmid b_2$. This contradicts that $D$ is square-free.

Thus $q\mid a_2$ implies $ q\mid b_1 \text{ and } q\mid a_1$, which contradicts that $(a_2,b_2)=1$. Therefore $q\nmid a_2$ implies $ q\mid a_1^2+Db_2^2$, and $q$ is a common divisor of $c_1$ and $c_2$, which is a contradiction.

Hence there cannot be a common prime factor of $a_1a_2-Db_1b_2\text{ and }a_1b_2+a_2b_1$. \end{proof}

\begin{rek} Note that this proof does not depend on the fact that the class group is a free $\Z_2$-module. So, given two normalized solutions $(a,b,c)$ and $(a',b',c')$ such that $(c,c') = 1$, one can multiply them together to get a new normalized solution of the form $(x,y,cc')$. \end{rek}

Now, we state the theorem that retrieves solutions from the factorization of an element in $G_D(\Q)$.

\begin{theorem}\label{thm:retrievesols}
For some $z \in G_D(\mathbb{Q})$ let $z = \pm\zeta_{p_1}^{\pm \alpha_1} \cdots \zeta_{p_k}^{\pm \alpha_k}$ where $\leg{-D}{p_i} = 1$ for all $i$. Then if $z$ corresponds to the normalized triple $(a, b, c)$, we must have $c = p_1^{\alpha_1} \cdots p_k^{\alpha_k}$.
\end{theorem}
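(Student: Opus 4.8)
The plan is to reduce the statement to two ingredients: first, that each single factor $\zeta_{p_i}^{\pm\alpha_i}$ is \emph{already in lowest terms} with denominator exactly $p_i^{\alpha_i}$; and second, that multiplying factors with pairwise coprime denominators simply multiplies the denominators, which is precisely the content of Lemma \ref{lem:multsols}. The leading sign $\pm 1$ is a unit and will not affect the denominator.

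First I would pin down the denominator of a single prime power. Write $\zeta_{p_i} = (x_0 + y_0\sqrt{-D})/p_i$ with $x_0^2 + Dy_0^2 = p_i^2$ and, by Lemma \ref{lem:definezeta}, $\gcd(x_0,y_0,p_i)=1$. Expanding, $\zeta_{p_i}^{\alpha_i} = (X + Y\sqrt{-D})/p_i^{\alpha_i}$ where $(x_0+y_0\sqrt{-D})^{\alpha_i} = X+Y\sqrt{-D}$ and, by multiplicativity of the norm, $X^2 + DY^2 = p_i^{2\alpha_i}$. The claim is that $\gcd(X,Y)=1$. Since $\gcd(X,Y)^2 \mid X^2+DY^2 = p_i^{2\alpha_i}$, the only prime that could divide both $X$ and $Y$ is $p_i$. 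I would rule this out by reducing modulo $p_i$: as $\leg{-D}{p_i}=1$ there is $t$ with $t^2 \equiv -D$ and $t\not\equiv 0 \pmod{p_i}$, and the two ring homomorphisms $\Z[\sqrt{-D}]\to\mathbb{F}_{p_i}$ sending $\sqrt{-D}\mapsto \pm t$ carry $X+Y\sqrt{-D}$ to $(x_0\pm y_0 t)^{\alpha_i}$. If $p_i\mid X$ and $p_i\mid Y$, both images vanish, and since $\mathbb{F}_{p_i}$ is a field we get $x_0 \pm y_0 t \equiv 0 \pmod{p_i}$; adding and subtracting these, and using that $p_i$ is odd and $t\not\equiv 0$, forces $p_i\mid x_0$ and $p_i\mid y_0$, contradicting $\gcd(x_0,y_0,p_i)=1$. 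Hence $\gcd(X,Y)=1$, so $\zeta_{p_i}^{\alpha_i}$ corresponds to the normalized solution $(X,Y,p_i^{\alpha_i})$. The exponent $-\alpha_i$ yields the complex conjugate $(X - Y\sqrt{-D})/p_i^{\alpha_i}$, which has the same denominator and coprime numerator components.

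Then I would assemble the global statement by induction on $k$ using Lemma \ref{lem:multsols}. Since the $p_i$ are distinct primes, the denominators $p_i^{\alpha_i}$ are pairwise coprime, and the numerator components of each factor are coprime by the previous step. Applying Lemma \ref{lem:multsols} to the running partial product—whose denominator is $\prod_{j\le i}p_j^{\alpha_j}$, coprime to $p_{i+1}^{\alpha_{i+1}}$—shows at each stage that the product is again a normalized solution and that its two numerator components remain coprime, so the hypotheses of the lemma are preserved at the next step. After $k-1$ multiplications, $\zeta_{p_1}^{\pm\alpha_1}\cdots\zeta_{p_k}^{\pm\alpha_k}$ is a normalized solution with denominator exactly $p_1^{\alpha_1}\cdots p_k^{\alpha_k}$ (in lowest terms, since the two numerator components are coprime). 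Multiplying by the leading $\pm 1$ does not change the reduced denominator, and as the reduced denominator of $z$ is precisely $c$, this gives $c = p_1^{\alpha_1}\cdots p_k^{\alpha_k}$.

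The main obstacle is the single-prime-power step, precisely because Lemma \ref{lem:multsols} cannot be invoked there: its hypothesis $\gcd(c_1,c_2)=1$ fails for the product $\zeta_{p_i}\cdot\zeta_{p_i}$. Everything hinges on showing that no extra factor of $p_i$ can be cancelled from $\zeta_{p_i}^{\alpha_i}$, which is exactly what the reduction-modulo-$p_i$ argument guarantees; once that is in place, the remainder is a routine induction built on Lemma \ref{lem:multsols}.
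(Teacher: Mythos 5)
Your proof is correct, and its skeleton matches the paper's: show that each factor $\zeta_{p_i}^{\pm\alpha_i}$ is in lowest terms with denominator exactly $p_i^{\alpha_i}$, combine the factors inductively via Lemma \ref{lem:multsols}, and conclude from the uniqueness of the reduced representation $\frac{a+b\sqrt{-D}}{c}$, $(a,b)=1$, of an element of $G_D(\Q)$. The genuine difference lies in the prime-power step, which you correctly isolated as the crux. The paper disposes of it in one clause, asserting ``due to Lemma \ref{lem:solexistence}'' that $\zeta_{p_i}^{\pm\alpha_i} = \frac{a_i+b_i\sqrt{-D}}{p_i^{\alpha_i}}$ with $(a_i,b_i)=1$; as written this is a leap, since Lemma \ref{lem:solexistence} only gives the \emph{existence} of a normalized solution $(a_i,b_i,p_i^{\alpha_i})$, and identifying that solution with the specific element $\zeta_{p_i}^{\pm\alpha_i}$ needs an additional argument (for instance, applying Theorem \ref{thm:factorgdq} to the element of $G_D(\Q)$ corresponding to that solution, whose denominator has $p_i$ as its only prime factor). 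Your reduction modulo $p_i$ --- using the two ring homomorphisms $\Z[\sqrt{-D}]\to\mathbb{F}_{p_i}$, $\sqrt{-D}\mapsto \pm t$ with $t^2\equiv -D$, to show that no factor of $p_i$ can be cancelled from $(x_0+y_0\sqrt{-D})^{\alpha_i}$ --- proves this claim directly and self-containedly, so on this point your write-up is more complete than the paper's. One cosmetic caveat applying equally to both arguments: the entries $a_1a_2-Db_1b_2$ and $a_1b_2+a_2b_1$ produced by Lemma \ref{lem:multsols} need not be positive, so ``normalized solution'' must be read up to sign, exactly as the paper does when it identifies rational points on the ellipse with normalized solutions up to sign.
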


\begin{proof} By definition of $G_D(\mathbb{Q})$, $z$ corresponds to the normalized triple $(a,b,c)$ if and only if $z=\frac{a+b\sqrt{-D}}{c}$ where $(a,b)=1$, so we write $z=\frac{a+b\sqrt{-D}}{c}$ where $(a,b)=1$.
Due to Lemma \ref{lem:solexistence}, for each $p_i$, we have $\zeta_{p_i}^{\pm \alpha_i} = \frac{a_i + b_i \sqrt{-D}}{p_i^{\alpha_i}}$ where $(a_i, b_i) = 1$, since we can write $p_i^{2\alpha_i} = a_i^2 + Db_i^2$ where $(a_i, b_i) = 1$. By Lemma \ref{lem:multsols},  $\pm \zeta_{p_1}^{\pm\alpha_1}\cdot\zeta_{p_2}^{\pm\alpha_2}\cdots \zeta_{p_k}^{\pm\alpha_k}$ corresponds to a normalized solution of the form $(\alpha, \beta, p_1^{\alpha_1}p_2^{\alpha_2}\cdots p_k^{\alpha_k})$, $z$ already corresponds to the normalized solution $(a,b,c)$ and we know that an element of $G_D(\mathbb{Q})$ can correspond to just one normalized solution. So we must have $c = p_1^{\alpha_1}\cdots p_k^{\alpha_k}$.\end{proof}

We have now proved two of our main results. First, Theorem \ref{thm:factorgdq}, which factorizes elements of $G_D(\Q)$, and second, Theorem \ref{thm:retrievesols}, which retrieves normalized solutions from the factorization of an element of $G_D(\Q)$. Using these tools we can enumerate the number of normalized solution of the form $(a,b,c)$ for a given $c$.

%%%%%%%%%%%%%%%%%%%%%%%%%%%%%%%%%%%%%%%%%%%%%%%%%%%%%%%%%%%%%%%%%%%%%%%%%%%%%%%%%%%%%%%%%%%%%%%%%%%%%%%%%%%%%%%%%%%%%%%%%%%%%%%%%%%%%%%%%%%%%%%%
%%%%%%%%%%%%%%%%%%%%%%%%%%%%%%%%%%%%%%%%%%%%%%%%%%%%%%%%%%%%%%%%%%%%%%%%%%%%%%%%%%%%%%%%%%%%%%%%%%%%%%%%%%%%%%%%%%%%%%%%%%%%%%%%%%%%%%%%%%%%%%%%
%%%%%%%%%%%%%%%%%%%%%%%%%%%%%%%%%%%%%%%%%%%%%%%%%%%%%%%%%%%%%%%%%%%%%%%%%%%%%%%%%%%%%%%%%%%%%%%%%%%%%%%%%%%%%%%%%%%%%%%%%%%%%%%%%%%%%%%%%%%%%%%%
\section{Cardinality of Rational Solutions}

With the two theorems from the previous section, we can count the the number of normalized solutions of the form $(a,b,c)$ with $c>0$. 

\begin{theorem}\label{thm:numsols}
Suppose that $-D\equiv 2,3$ (mod 4), $D > 1$ and $c = p_1^{n_1} \cdots p_k^{n_k}$. Then there is a normalized solution to \eqref{eq:pelleq} of the form $(a,b,c)$ if and only if 
all the $p_i\in S$. Further, if all the $p_i\in S$, we have exactly $2^{k-1}$ normalized solutions to \eqref{eq:pelleq} of the form $(a,b,c)$.
\end{theorem}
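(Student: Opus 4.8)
The plan is to split the statement into its existence half and its counting half and to lean on the three results already established. For the existence half, I would first dispose of even $c$: since $S$ consists only of odd primes, the condition ``all $p_i \in S$'' forces $c$ to be odd, while by Lemma \ref{lem:oddsols} no normalized solution $(a,b,c)$ exists when $c$ is even, so both sides of the biconditional fail in the even case. When $c$ is odd the claim is exactly Lemma \ref{lem:solexistence}, since for an odd prime $p_i$ we have $p_i \in S$ if and only if $\leg{-D}{p_i} = 1$.

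For the counting half, assume all $p_i \in S$ and write $c = p_1^{n_1}\cdots p_k^{n_k}$ with $k \ge 1$. I would set up a correspondence between normalized solutions with denominator $c$ and the formal products $\pm \zeta_{p_1}^{\pm n_1}\cdots \zeta_{p_k}^{\pm n_k}$, of which there are $2^{k+1}$ (one sign for the leading $\pm 1$ and one for each exponent). Each normalized solution $(a,b,c)$ gives $z = (a+b\sqrt{-D})/c \in G_D(\Q)$, and Theorem \ref{thm:factorgdq} writes $z$ as one such product; conversely Theorem \ref{thm:retrievesols} guarantees every such product corresponds to a normalized solution whose denominator is exactly $c$. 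So the assignment $z \mapsto (\lvert a\rvert,\lvert b\rvert,c)$ maps the set $R_c$ of these products onto the normalized solutions with denominator $c$.

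The heart of the argument is to show this map is exactly four-to-one. I would observe that the group generated by negation $z \mapsto -z$ and conjugation $z \mapsto \bar z$ acts on $R_c$: negation flips the leading sign, and since $\zeta_q \bar\zeta_q = 1$ gives $\bar\zeta_q = \zeta_q^{-1}$, conjugation flips every exponent sign, so both preserve the product form. Two rational points yield the same normalized solution precisely when they agree up to this sign/conjugation symmetry, i.e.\ lie in the same orbit. Because $c > 1$ forces $a \ne 0$ and $b \ne 0$ (if $b=0$ then $c\mid a$ and $\gcd(a,b,c)>1$; if $a=0$ then $D$ would be a perfect square), the four points $z,-z,\bar z,-\bar z$ are distinct, the action is free, and every orbit has size four; counting orbits gives $\lvert R_c\rvert/4$ normalized solutions.

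It remains to compute $\lvert R_c\rvert$, and this is the step I expect to be the main obstacle, since it requires multiplicative independence of the $\zeta_{p_i}$ (the freeness of $F$ in $G_D(\Q)=U\times F$). I would derive it from Theorem \ref{thm:retrievesols}: if two of the formal products coincide in $G_D(\Q)$, then some product $\prod_i \zeta_{p_i}^{m_i}$ with each $m_i \in \{0,\pm 2n_i\}$ equals $\pm 1$; regrouping and applying Theorem \ref{thm:retrievesols} forces its denominator $\prod_{m_i\ne 0} p_i^{\lvert m_i\rvert}$ to equal $1$, so every $m_i=0$ and the two products were already identical. Hence all $2^{k+1}$ products are distinct, $\lvert R_c\rvert = 2^{k+1}$, and the number of normalized solutions is $2^{k+1}/4 = 2^{k-1}$, as claimed.
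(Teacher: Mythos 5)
Your proposal is correct and follows essentially the same route as the paper: identify the normalized solutions having denominator $c$ with the products $\pm\zeta_{p_1}^{\pm n_1}\cdots\zeta_{p_k}^{\pm n_k}$ via Theorems \ref{thm:factorgdq} and \ref{thm:retrievesols}, then quotient by the order-four group generated by negation and complex conjugation to get $2^{k-1}$. In fact you are somewhat more careful than the paper on two points it leaves implicit: the even-$c$ case of the existence claim (the paper simply cites Lemma \ref{lem:solexistence}, which is stated only for odd $c$), and the multiplicative independence of the $\zeta_{p_i}$ needed to know that all $2^{k+1}$ formal products are distinct elements of $G_D(\Q)$ before dividing by the orbit size.
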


\begin{proof}
The first statement is proven by Lemma $\ref{lem:solexistence}$. Let us fix $c = p_1^{n_1} \cdots p_n^{n_k}$ such that $\leg{-D}{p_i} = 1$ for $1 \le i \le k$ and represent the solutions to $a^2 + Db^2 = c^2$ as $\frac{a + b\sqrt{-D}}{c}$. Define the sets $$T_1\ :=\ \left\{\frac{a + b\sqrt{-D}}{c} : a^2 + Db^2 = c^2, (a,b) = 1, c > 1\right\}$$ and $$T_2\ :=\ \left\{\pm \zeta_{p_1}^{\epsilon_1 n_1} \cdots \zeta_{p_k}^{\epsilon_k n_k} : \epsilon_i \in \{\pm1\}\right\}.$$ By Theorem \ref{thm:factorgdq}, we have $T_1 \subset T_2$ and, by Theorem \ref{thm:retrievesols}, we get $T_2 \subset T_1$. Now for every solution $(a,b)$ to $x^2 + Dy^2 = c^2$, we can find other solutions by multiplying $-1$ to $z = \frac{a + b\sqrt{-D}}{c}$ or by taking the complex conjugate of $c$. Therefore, for every $(a,b)$ there are four distinct solutions corresponding to the integers $a$ and $b$. They are $\frac{a + b\sqrt{-D}}{c}, \frac{-a + b\sqrt{-D}}{c}, \frac{a - b\sqrt{-D}}{c},$ and $\frac{-a - b\sqrt{-D}}{c}$. If $\Gamma$ is the abelian group of order $4$ generated by multiplication by $-1$ and complex conjugation that acts on $G_D(\Q)$, then the four solutions corresponding to the integers $a,b$ is the orbit of $\frac{a + b\sqrt{-D}}{c}$ under the action of $\Gamma$. Therefore, the normalized solutions to $x^2 + Dy^2 = c^2$ are given by $T_1/\Gamma$. 
    
As complex conjugation on an element $z\in T_2$ corresponds to the map $\epsilon_i \to -\epsilon_i$, so we get \be T_2/\Gamma \ = \  \left\{\zeta_{p_1}^{n_1} \zeta_{p_2}^{\epsilon_2 n_2} \cdots \zeta_{p_k}^{\epsilon_k n_k} : \epsilon_i \in \{\pm1\}\right\}.\ee  Since $T_2/\Gamma = T_1/\Gamma$, we have that the set of normalized solutions is given by $T_2/\Gamma$. Therefore, since there are $k-1$ numbers $\epsilon_i$ with two choices for each, we obtain $|T_2/\Gamma| = 2^{k-1}$.
\end{proof}

\begin{rek}\label{rek: evenc} While the theorems above are stated and proved for $-D\equiv 2,3$ (mod 4), we can generalize them to  $-D\equiv 1$ (mod 4) as well.  In that case all of these theorems remain valid \textit{only} when $c$ is odd, because when $-D\equiv 1$ (mod 4), there could exist normalized solutions of the form $(a,b,c)$ where $c$ is even, which cannot happen when $-D\equiv 2,3$ (mod 4) (see Lemma \ref{lem:oddsols}). \end{rek}

\begin{rek}\label{rek: D=1}
    For the case when $D = 1$, the argument must be modified, because in this case the group of units of $\Z[i]$ is $\{ \pm 1, \pm i\}$. Also for a solution $(a,b,c)$ to $x^2 + Dy^2 = z^2$ to be normalized when $D = 1$, there is the added condition that $a < b$. This is necessary, because if $(a,b,c)$ is a solution then so is $(b,a,c)$. The following proofs must be changed to account for these differences.
    
    Lemma $\ref{lem:definezeta}$ does not hold in this case, because it relies on the fact that the group of units of $\Z[\sqrt{-D}]$ is $\{\pm 1\}$. We use this lemma to prove that our definition of $\zeta_p$ is well defined. We can rectify this problem by defining $\zeta_p$ in the same way as \cite{Ye}. That is, for a prime $p$ such that $\leg{-1}{p} = 1$, $p = m^2 + n^2$ for $m, n \in \Z$. Because $p$ is odd, we have $|m|\ \ne |n|$. Therefore, we can assume $0 < m < n$. We can then define $q = m + ni$ and $\zeta_p = \frac{q}{\overline{q}}$ where $\overline{q}$ is the complex conjugate of $q$. Theorem $\ref{thm:factorgdq}$ and Theorem $\ref{thm:retrievesols}$ also rely on the fact that the group of units is $\{ \pm 1\}$. If we change the group of units to be $\{\pm 1, \pm i\}$, then the arguments hold if we change $z = \pm\zeta_{p_1}^{\pm \alpha_1} \cdots \zeta_{p_k}^{\pm \alpha_k}$ to $z = \pm i^r \zeta_{p_1}^{\pm \alpha_1} \cdots \zeta_{p_k}^{\pm \alpha_k}$ where $r \in \{0,1\}$. Therefore, we  still have the factorization $G_1(\Q) = U \times F$ where $U$ is the group of units of $\Z[i]$ and $F$ is the free abelian group with generators $\{\zeta_p\}$. 
    Finally, Theorem $\ref{thm:numsols}$ changes as follows. Given a solution $(a,b,c)$ we have 8 distinct solutions corresponding to the integers $a,b$. That is if we can multiply $\frac{a + bi}{c}$ by $\pm 1$, $\pm i$ or take complex conjugation to get another distinct solution. Therefore, we define $\Gamma$ to be the group generated by multiplication of $-1$, $i$ and complex conjugation. We also define $$T_2\ :=\ \left\{\pm i^r \zeta_{p_1}^{\epsilon_1 n_1} \cdots \zeta_{p_k}^{\epsilon_k n_k} : \epsilon_i \in \{\pm1\}, r \in \{0, 1\} \right\}.$$
    With these changes, the same argument gives us the result of Theorem $\ref{thm:numsols}$. 
\end{rek}

\begin{comment}
$\bullet$ To understand whether the class groups $C(-4D)$ is a free $\Z_2$-module or not, we can check whether every element of $C(-4D)$ has order $\le$ 2 or not. This is an equivalent condition for any group being a free $\Z_2$-module.\\
To check whether an element of $C(-4D)$ has order $\le 2$, we can use the following theorem.

\textbf{{Theorem: }} The class represented by the reduced form $ax^2+bxy+cy^2$ has order $\le 2$ in the $C(-4D)$ \textit{if and only if} $b=0$ or $a=b$ or $a=c$.
\textit{Proof:} This is essentially Lemma 3.10 of (Cox).\par\vspace{0.154cm}
Using this theorem, once we can deduce the order of each class depending on their generator that is in reduced form, we can simply check whether the group $C(-4D)$ is a free $\Z_2$-module or not.
\end{comment}

%%%%%%%%%%%%%%%%%%%%%%%%%%%%%%%%%%%%%%%%%%%%%%%%%%%%%%%%%%%%%%%%%%%%%%%%%%%%%%%%%%%%%%%%%%%%%%%%%%%%%%%%%%%%%%%%%%%%%%%%%%%%%%%%%%%%%%%%%%%%%%%%
%%%%%%%%%%%%%%%%%%%%%%%%%%%%%%%%%%%%%%%%%%%%%%%%%%%%%%%%%%%%%%%%%%%%%%%%%%%%%%%%%%%%%%%%%%%%%%%%%%%%%%%%%%%%%%%%%%%%%%%%%%%%%%%%%%%%%%%%%%%%%%%%
%%%%%%%%%%%%%%%%%%%%%%%%%%%%%%%%%%%%%%%%%%%%%%%%%%%%%%%%%%%%%%%%%%%%%%%%%%%%%%%%%%%%%%%%%%%%%%%%%%%%%%%%%%%%%%%%%%%%%%%%%%%%%%%%%%%%%%%%%%%%%%%%
\section{Examples and Future Work}

%%%%%%%%%%%%%%%%%%%%%%%%%%%%%%%%%%%%%%%%%%%%%%%%%%%%%%%%%%%%%%%%%%%%%%%%%%%
%%%%%%%%%%%%%%%%%%%%%%%%%%%%%%%%%%%%%%%%%%%%%%%%%%%%%%%%%%%%%%%%%%%%%%%%%%%
\subsection{Examples}

We give a few examples of our results.\\ \

\begin{itemize}
    \item Theorem $\ref{thm:numsols}$ holds when the class group $C(-4D)$ has order $\le 2$ and $-D \equiv 2,3$ mod 4. This only occurs when $D=1,\ 2,\ 5,\ 6,\ 10,\ 13,\ 22,\ 37,\ 58$. \\ \
    
    \item The class group can also be a free $\Z_2$-module when $|C(-4D)| > 2$. For example, in case of $D=210$, the class number is $8$ and each reduced form has order at most $2$.\\ \

    \item The class group is not always a free $\Z_2$-module. For example, in the case of $D=26$, the class number is 6. So the class group cannot be a free $\Z_2$-module. In fact, even if the class number is $2^n$, the class group might not be a free $\Z_2$-module. For example, when $D=34$, the class number is $4$, and it contains a reduced form $[5,2,7]$, which does not have order at most $2$. The class containing this reduced form has order $4$, which means that the class group is a cyclic group of order 4. Note that in all of the above examples, $-D\equiv 2,3\ (\text{mod }4)$.\\ \
    
    \item For our results to be true, we need the class group to be a free $\Z_2$-module. Otherwise Theorem \ref{thm:numsols} might not be true. For example take $D=26$ as above and consider $c=5$. Note that \be \left(\frac{-26}{5}\right) \ = \ 1,\ee yet $x^2+26y^2=5^2$ has no normalized solution.\\ \
\end{itemize}

%%%%%%%%%%%%%%%%%%%%%%%%%%%%%%%%%%%%%%%%%%%%%%%%%%%%%%%%%%%%%%%%%%%%%%%%%%%
%%%%%%%%%%%%%%%%%%%%%%%%%%%%%%%%%%%%%%%%%%%%%%%%%%%%%%%%%%%%%%%%%%%%%%%%%%%
\subsection{Future Work}

Here are some possible avenues for extending the work of this paper.

\begin{itemize}
    \item This paper does not have full results for the case $-D \equiv 1$ (mod 4). As stated in Remark $\ref{rek: evenc}$, the problem with this case is that for a normalized solution $(a,b,c)$, $c$ can be even. Many of our results rely on the fact that certain primes are odd. What can be said about the solutions to $x^2 + Dy^2 = c^2$ when $c$ is even?\\ \
    
    \item Another restriction in this paper for the results to hold is that $C(-4D) \cong (Z_2)^n$. This is necessary in our method, because we use that every element of $C(-4D)$ has order at most $2$ when determining if there exists a normalized solution $(a,b,c)$ for a given $c$. With more advanced tools is there a way to extend these results for a broader range of integers $D$? \\ \
    
    \item What can be said about other Diophantine equations? Can we count the number of normalized solutions to $x^2 + y^2 + z^2 = w^2$, or even more generally $x^2 + D_1 y^2 + D_2 z^2 = w^2$ for example? For $x_1^2 + x_2^2 + x_3^2 + x_4^2 = z^2$ what are the consequences of the multiplicativity of the quaternion norm?
\end{itemize}

%%%%%%%%%%%%%%%%%%%%%%%%%%%%%%%%%%%%%%%%%%%%%%%%%%%%%%%%%%%%%%%%%%%%%%%%%%%%%%%%%%%%%%%%%%%%%%%%%%%
%%%%%%%%%%%%%%%%%%%%%%%%%%%%%%%%%%%%%%%%%%%%%%%%%%%%%%%%%%%%%%%%%%%%%%%%%%%%%%%%%%%%%%%%%%%%%%%%%%%%%%%%
%%%%%%%%%%%%%%%%%%%%%%%%%%%%%%%%%%%%%%%%%%%%%%%%%%%%%%%%%%%%%%%%%%%%%%%%%%%%%%%%%%%%%%%%%%%%%%%%%%%%%%%%
%%%%%%%%%%%%%%%%%%%%%%%%%%%%%%%%%%%%%%%%%%%%%%%%%%%%%%%%%%%%%%%%%%%%%%%%%%%%%%%%%%%%%%%%%%%
%\newpage

%%%{\footnotesize}
% USE THE FORMAT BELOW FOR JOURNAL REFERENCE(S)

\ \\

\end{document}